\documentclass[a4paper,12pt,reqno]{amsart}
\usepackage{amsmath,amsfonts,amsthm,amssymb,color}
\usepackage[T1]{fontenc}
\usepackage{enumerate}
\usepackage{hyperref}

\usepackage{pdfsync}
\usepackage{graphicx}


 \topmargin -0.4in  \headsep 0.4in  \textheight 8.8in
  \oddsidemargin 0.02in  \evensidemargin 0.15in
\textwidth 6.3in


\newcommand{\hi}{\hat{I}}
\newcommand{\hm}{\hat{M}}

\newcommand{\tss}{\textsuperscript}


\newcommand{\eps}{\ensuremath{\varepsilon}}

\newcommand{\di}{\ensuremath{\mathrm{d}}}


\newcommand{\R}{\mathbb R}


\newcommand{\bp}{\mathbf{P}}


\newcommand{\cac}{\mathcal C}

\newcommand{\cf}{\mathcal F}

\newcommand{\al}{\alpha}
\newcommand{\ga}{\gamma}

\newcommand{\de}{\delta}
\newcommand{\ep}{\varepsilon}

\newcommand{\ka}{\kappa}
\newcommand{\la}{\lambda}

\newcommand{\om}{\omega}
\newcommand{\oom}{\Omega}

\newcommand{\si}{\sigma}

\newcommand{\ze}{\zeta}

\newcommand{\lp}{\left(}
\newcommand{\rp}{\right)}
\newcommand{\lc}{\left[}
\newcommand{\rc}{\right]}
\newcommand{\lcl}{\left\{}
\newcommand{\rcl}{\right\}}


\newtheorem{theorem}{Theorem}[section]

\newtheorem{hypothesis}[theorem]{Hypothesis}

\newtheorem{notation}[theorem]{Notation}

\newtheorem{proposition}[theorem]{Proposition}

\theoremstyle{remark}
\newtheorem{remark}[theorem]{Remark}
\theoremstyle{remark}


\newcommand{\bean}{\begin{eqnarray*}}
\newcommand{\eean}{\end{eqnarray*}}
\newcommand{\ben}{\begin{enumerate}}
\newcommand{\een}{\end{enumerate}}
\newcommand{\beq}{\begin{equation}}
\newcommand{\eeq}{\end{equation}}

\begin{document}

\title[Bacteriophage systems]{A stochastic model for bacteriophage therapies}

\author{X. Bardina \and D. Bascompte \and C. Rovira \and S. Tindel}
\begin{abstract}
In this article, we analyze a system modeling bacteriophage treatments for infections in a noisy context. In the small noise regime, we show that after a reasonable amount of time the system is close to a sane equilibrium (which is a relevant biologic information) with high probability. Mathematically speaking, our study hinges on concentration techniques for delayed stochastic differential equations.
\end{abstract}

\address{
{\it Xavier Bardina, David Bascompte:}
{\rm Departament de Matem\`atiques, Facultat de Ci\`encies, Edifici C, Universitat Aut\`onoma de Barcelona, 08193 Bellaterra, Spain}.
{\it Email: }{\tt Xavier.Bardina@uab.cat, David.Bascompte@uab.cat}
\newline
$\mbox{ }$\hspace{0.1cm}
{\it Carles Rovira:}
{\rm Facultat de Matem\`atiques, Universitat de Barcelona, Gran Via 585, 08007 Barcelona}.
{\it Email: }{\tt carles.rovira@ub.edu}
\newline
$\mbox{ }$\hspace{0.1cm} {\it Samy Tindel:} {\rm Institut \'Elie
Cartan Nancy, B.P. 239, 54506 Vand{\oe}uvre-l\`es-Nancy Cedex,
France}. {\it Email: }{\tt tindel@iecn.u-nancy.fr} }

\thanks{S. Tindel is member of the BIGS (Biology, Genetics and Statistics) team at INRIA. X. Bardina and D. Bascompte are supported by the grant MTM2009-08869 from the Ministerio de Ciencia e
Innovaci\'on. C. Rovira is supported by the grant MTM2009-07203 from the Ministerio de Ciencia e
Innovaci\'on.}

\subjclass[2010]{Primary 60H35; Secondary 60H10, 65C30, 92C60}

\keywords{Bacteriophage, competition systems, Brownian motion, large deviations}

\maketitle

\section{Introduction}

In the last years Bacteriophage therapies are attracting the attention of several scientific studies. They can be a new and powerful tool to treat bacterial infections or to prevent them applying the treatment to animals such as poultry or swine. Very roughly speaking, they consist in inoculating a (benign) virus in order to kill the bacteria known to be responsible of a certain disease. This kind of treatment is known since the beginning of the 20\tss{th} century, but has been in disuse in the Western world, erased by antibiotic therapies. However, a small activity in this domain has survived in the USSR, and it is now re-emerging (at least at an experimental level). Among the reasons of this re-emersion we can find the progressive slowdown in antibiotic efficiency (antibiotic resistance). Reported recent experiments include animal diseases like hemorrhagic septicemia in cattle or atrophic rhinitis in swine, and a need for suitable mathematical models is now expressed by the community.

\smallskip

Let us be a little more specific about the (lytic) bacteriophage mechanism: after attachment, the virus' genetic material penetrates into the bacteria and use the host's replication mechanism to self-replicate. Once this is done, the bacteria is completely spoiled while new viruses are released, ready to attack other bacteria. It should be noticed at this point that among the advantages expected from the therapy is the fact that it focuses on one specific bacteria, while antibiotics also attack autochthonous microbiota. Roughly speaking, it is also believed that viruses are likely to adapt themselves to mutations of their host bacteria.

\smallskip

At a mathematical level, whenever the mobility of the different biological actors is high enough, bacteriophage systems can be modeled by a kind of predator-prey equation. Namely, set $S_t$ (resp. $Q_t$) for the bacteria (resp. bacteriophages) concentration at time~$t$. Consider a truncated identity function $\si:\R_+\to\R_+$, such that $\si\in\cac^{\infty}$, $\si(x)=x$ whenever $0\le x\le M$ and $\si(x)=M+1$ for $x>M+1$. Then a model for the evolution of the couple $(S,Q)$ is as follows:
\begin{equation}\label{e:truncada2}
\left\{
\begin{aligned}
\di S_t & =\left[\alpha-k\sigma(Q_t)\right]S_t\di t\\
\di Q_t & = \left[d-mQ_t-k \sigma(Q_{t})S_{t}+k \, b \, e^{-\mu \ze} \sigma(Q_{t-\ze})S_{t-\ze}\right]\di t,
\end{aligned}
\right.
\end{equation}
where $\al$ is the reproducing rate of the bacteria and $k$ is the adsorption rate. In equation (\ref{e:truncada2}), $d$ also stands for the quantity of bacteriophages inoculated per unit of time, $m$ is their death rate, we denote by $b$ the number of bacteriophages which is released after replication within the bacteria cell, $\zeta$ is the delay necessary to the reproduction of bacteriophages (called latency time) and the coefficient $e^{-\mu \ze}$ represents an attenuation in the release of bacteriophages (given by the expected number of bacteria cell's deaths during the latency time, where $\mu$ is the bacteria's death rate). A given initial condition $(S_0,Q_0)$ is also specified. When modeling biological phenomena, one usually assumes models like (\ref{e:truncada2}), where $\si$ is replaced by the identity function. We have considered here the truncation of the identity $\si$ in order to manipulate bounded coefficients in our equations, but our parameter $M$ can also be interpreted as a maximal infection rate of bacteria by bacteriophages. One should also be aware of the fact that the latency time $\ze$ (which can be seen as the reproduction time of the bacteriophages within the bacteria) cannot be neglected, and is generally of the same order (about 20mn) as the experiment length (about 60mn).

\smallskip

According to the values of the different parameters of the system and of the initial conditions, different types of equilibriums for equation (\ref{e:truncada2}) might emerge. We shall focus in the sequel on the simplest of these regimes, namely when $d$ is large enough (the exact condition is $kd/m>\al$). This makes the mathematical analysis easier, and it corresponds to the existence of a unique stable steady state $E_0=(0,d/m)$ for  our system (in particular bacteria have been eradicated). Notice however that we can perfectly assume the regime $kd/m>\al$ since the treatment allows to inject high quantities of viruses. One should also mention a natural generalization of our problem: Consider the action of several varieties bacteriophages, which is an option widely considered among practitioners. We have restricted our analysis here to a simplified situation for sake of readability.

\smallskip

It is perfectly assumable that noise will appear when collecting data from laboratory tests. Moreover, when one wishes to go from in vitro to in vivo modeling, it is commonly accepted that noisy versions of the differential systems at stake have to be considered. This program has been carried out e.g. for HIV dynamics in \cite{DGM} and for bacteriophages in marine organisms in \cite{Ca}. In those references it is always assumed that the noise enters in a bilinear way, which is quite natural in this situation and ensures positivity of the solution. We shall take up this strategy here, and consider system (\ref{e:truncada2}) with a small random perturbation of the form
\begin{equation}\label{e:truncada-delayed}
\left\{
\begin{aligned}
\di S_t^\eps & =\left[\alpha-k\sigma(Q_t^\eps)\right]S_t^\eps\di t + \eps\sigma(S_t^\eps)\circ\di W^1_t\\
\di Q_t^\eps & = \left[d-mQ_t^\eps-k \sigma(Q_{t}^\eps)S_{t}^\eps+k \, b \, e^{-\mu \ze} \sigma(Q_{t-\ze}^\eps)S_{t-\ze}^\eps\right]\di t + \eps\sigma(Q_t^\eps)\circ\di W^2_t,
\end{aligned}
\right.
\end{equation}
where $\eps$ is a small positive coefficient and $W=(W^1,W^2)$ is a
2-dimensional Brownian motion defined on a complete probability
space $(\Omega,\cf,\bp)$. Our aim will then be to prove that for a
time $\tau_0$ within a reasonable range, the couple
$Z_{\tau_0}^{\varepsilon}:=(S_{\tau_0}^\eps,Q_{\tau_0}^\eps)$ is not too far away from its
stable equilibrium $E_0$. Note that \emph{reasonable range} is meant
here as a time which corresponds to the order of both  the latency
delay and the time when the immune system of the animal can cope
with the remaining bacteria.

\smallskip

As we shall see in the sequel, the treatment of equation \eqref{e:truncada-delayed} involves the introduction of some rather technical assumptions on our coefficients. For sake of readability, we have thus decided to handle first the following system without delay:
\begin{equation}\label{e:truncada}
\left\{
\begin{aligned}
\di S_t^\eps & =\left[\alpha-k\sigma(Q_t^\eps)\right]S_t^\eps\di t + \eps\sigma(S_t^\eps)\circ\di W^1_t\\
\di Q_t^\eps & = \left[d-mQ_t^\eps+k(b-1)\sigma(Q_t^\eps)S_t^\eps\right]\di t + \eps\sigma(Q_t^\eps)\circ\di W^2_t,
\end{aligned}
\right.
\end{equation}
where we notice that the only difference between \eqref{e:truncada-delayed} and \eqref{e:truncada} is that we have set $\zeta=0$ in the latter.

\smallskip

The main advantage of equation \eqref{e:truncada} lies into the fact that  we are able to work under the following rather simple set of assumptions:
\begin{hypothesis}\label{hyp:coeff-sde}
We will suppose that the coefficients of equation (\ref{e:truncada}) satisfy:

\smallskip

\noindent
\emph{(i)} The initial condition $(S_0,Q_0)$ of the system lies into the region
$$
R_0:=\left[0,\frac{mM-d}{k(b-1)M}\right]\times[d/m,M].
$$

\smallskip

\noindent
\emph{(ii)} The coefficient $\ga=kd/m-\al$ is strictly positive and $M>d/m$.
\end{hypothesis}

We shall also use extensively the following notations:
\begin{notation}
The letters $c,c_1,c_2,\ldots$ will stand for universal constants, whose exact value is irrelevant. For a continuous function $f$, we set $\|f\|_{\infty,I}=\sup_{x\in I}|f(x)|$.
\end{notation}

Then the previous loose considerations about convergence to $E_0$ can be summarized in the following theorem, which is the main result of our paper for our bacteriophage system without delay:
\begin{theorem}\label{thm:concentration-equilibrium}
Given positive initial conditions, equation (\ref{e:truncada}) admits a unique solution which is almost surely an element of $\cac(\R_+,\R_+^2)$. Assume furthermore Hypothesis \ref{hyp:coeff-sde}, set $\eta=m/2\wedge \ga$ and consider 3 constants $1<\ka_1<\ka_2<\ka_3$. Then there exists $\rho_0$ such that for any $\rho\le \rho_0$ and any interval of time of the form $I=[\ka_1 \ln(c/\rho)/\eta, \ka_2 \ln(c/\rho)/\eta]$, we have
\begin{equation}\label{eq:concentration-equilibrium}
\bp\lp  \|Z^{\eps}-E_0\|_{\infty,I} \ge 2 \rho \rp  \le
\exp\lp -\frac{c_1 \rho^{2+\la}}{\ep^2}  \rp,
\end{equation}
where $\la$ is a constant satisfying $\la>\ka_3/\eta$.
\end{theorem}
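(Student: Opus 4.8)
The plan is to split the statement into its deterministic skeleton and a stochastic fluctuation around it, and then to control the fluctuation over the (logarithmically long) interval $I$ by an exponential concentration inequality. Existence, uniqueness and positivity come essentially for free: since $\si$ is smooth and bounded, the coefficients of \eqref{e:truncada} are globally Lipschitz with bounded derivatives, so classical theory yields a unique strong solution in $\cac(\R_+,\R^2)$. Positivity, hence membership in $\cac(\R_+,\R_+^2)$, follows from the multiplicative Stratonovich structure: the noise $\eps\si(S^\eps)\circ\di W^1$ degenerates when $S^\eps=0$ because $\si(0)=0$, and likewise for $Q^\eps$, so that, together with the fact that the drift points inward on $\{S=0\}$ and $\{Q=0\}$, the positive quadrant is invariant.

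First I would analyze the noiseless system ($\eps=0$). Starting in $R_0$ one checks that the trajectory stays in a bounded region on which $\si$ acts as the identity, so the truncation may be dropped. There $S^0_t$ decays at rate $\ga$, since its drift $\al-k\si(Q^0_t)$ is close to $-\ga$ once $Q^0\approx d/m$, while $Q^0_t-d/m$ satisfies a linear equation with rate $m$ perturbed by the vanishing term $k(b-1)\si(Q^0)S^0$; absorbing this coupling costs a factor $2$ and gives $|Z^0_t-E_0|\le c\,e^{-\eta t}$ with $\eta=m/2\wedge\ga$. In particular, because the left endpoint of $I$ equals $\ka_1\ln(c/\rho)/\eta$ with $\ka_1>1$, the deterministic solution obeys $|Z^0_t-E_0|\le\rho^{\ka_1}\le\rho$ throughout $I$.

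Next I would set $Y^\eps:=Z^\eps-Z^0$ and, by the triangle inequality, reduce \eqref{eq:concentration-equilibrium} to the bound $\bp(\sup_{t\in I}|Y^\eps_t|\ge\rho)\le\exp(-c_1\rho^{2+\la}/\eps^2)$. Converting the Stratonovich integral to It\^o produces an $O(\eps^2)$ drift correction together with a martingale $M^\eps_t=\eps\int_0^t(\cdots)\,\di W_s$ whose bracket satisfies $\langle M^\eps\rangle_t\le c\,\eps^2 t$ by boundedness of $\si$. The equation for $Y^\eps$ has a contracting linearization along the trajectory: the Jacobian of the drift at $E_0$ is lower triangular with eigenvalues $-\ga$ and $-m$, so a Gronwall argument against this stable part bounds $Y^\eps$ by a stochastic convolution against $M^\eps$ of size $O(\eps)$ that does \emph{not} accumulate in time. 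Here the dominant fluctuation is the $Q$-component, for which $\si(Q^\eps)\approx d/m$ near equilibrium renders the noise essentially additive of amplitude $\eps\,d/m$; the $S$-component carries only multiplicative noise on a decaying process and contributes a far smaller probability, of order $\rho^{c/\eps^2}$.

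The main obstacle is this last concentration step: bounding $\sup_{t\in I}|Y^\eps_t|$ over an interval whose length grows like $\ka_2\ln(c/\rho)/\eta$ as $\rho\to0$. A crude martingale estimate degrades because $\langle M^\eps\rangle_T$ grows with $T$; the key is to use the contraction so that the relevant Gaussian quantity has variance of order $\eps^2$ uniformly in $t$, and then to pass from fixed times to the supremum by a discretization or chaining argument. This yields a bound of the form $(\text{prefactor})\cdot\exp(-c\rho^2/\eps^2)$, where the prefactor is polynomial in $\ln(1/\rho)$ and in the mesh count $\sim\ka_2\ln(c/\rho)/\eta$. Trading $\rho^2$ for $\rho^{2+\la}$ absorbs this prefactor into the exponent, and the margin $\la>\ka_3/\eta$ with $\ka_3>\ka_2$ is precisely what leaves enough room to swallow the interval length and the chaining entropy, uniformly for all $\rho\le\rho_0$ once $\rho_0$ is chosen small. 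Applying the exponential Bernstein-type martingale inequality to $M^\eps$ on $[0,\sup I]$ then closes the estimate.
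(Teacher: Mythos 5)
Your outer skeleton coincides with the paper's: the triangle-inequality split $\|Z^{\eps}-E_0\|_{\infty,I}\le\|Z^{\eps}-Z^{0}\|_{\infty,I}+\|Z^{0}-E_0\|_{\infty,I}$, with the deterministic term vanishing on $I$ because the left endpoint $\ka_1\ln(c/\rho)/\eta$ with $\ka_1>1$ forces $|Z^0_t-E_0|\le\rho$ there. The genuine gap is in your key concentration step. You claim that the fluctuation $Y^{\eps}=Z^{\eps}-Z^{0}$ stays of size $O(\eps)$ uniformly in time thanks to ``a Gronwall argument against this stable part.'' Gronwall's lemma cannot deliver this: it converts an inequality $|Y_t|\le c\int_0^t|Y_s|\,\di s+N_t$ into a bound that grows like $e^{ct}$, and it is blind to the sign of the feedback. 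To genuinely exploit stability you would need (i) variation of constants against the fundamental solution $\Phi(t,s)$ of the time-varying linearization $\dot y=DF(Z^0_t)\,y$; (ii) a proof that $\|\Phi(t,s)\|\le C e^{-\eta'(t-s)}$ uniformly --- pointwise negativity of the eigenvalues of a time-varying matrix does \emph{not} imply exponential decay of its fundamental solution, and here one must use that $Z^0_t\to E_0$ exponentially so that $DF(Z^0_t)$ approaches the stable triangular matrix $A_0$ (on the initial segment $[0,\ka_1\ln(c/\rho)/\eta]$, where $Z^0$ is far from $E_0$, the ``Jacobian at $E_0$'' is not even the relevant object); and (iii) a stopping-time bootstrap for the quadratic remainder $F(Z^{\eps})-F(Z^{0})-DF(Z^0)Y^{\eps}$, which is dominated by the linear part only while $|Y^{\eps}|$ remains of order $\rho$ --- without this localization the linearization step is circular. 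None of these ingredients appears in your sketch, and they are precisely the technical heart of the route you chose.

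It is worth seeing how the paper sidesteps all of this: it makes no attempt to prevent accumulation of noise. A crude Gronwall bound gives $|Z_t^{\eps}-Z_t^{0}|^2\le \ka_2\eps^2\lp|J_t^1|^2+|J_t^2|^2\rp e^{\ka_3\tau}$ on $[0,\tau]$, accepting an exponential-in-$\tau$ loss; the Stratonovich integrals are split into an It\^o martingale (handled by the exponential martingale inequality) and a bounded-variation part of size $O(\eps^2\tau)$ (killed for $\eps$ small). This yields $\bp(\|Z^{\eps}-Z^{0}\|_{\infty,[0,\tau]}>\rho)\le\exp(-c\rho^2/(e^{\ka\tau}\eps^2))$, and since $\tau\sim\ln(1/\rho)$ the loss $e^{\ka\tau}$ is merely polynomial in $1/\rho$: absorbing it is exactly the role of the exponent $\rho^{2+\la}$, and the nonemptiness of $I$ produces the constraint tying $\la$ to $\ka_1,\ka_2,\eta$. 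So the peculiar constants in the statement are artifacts of the Gronwall loss, not of chaining entropy as you suggest; your contraction route, if completed, would in fact prove a stronger statement (any $\la>0$ would do), which is a hint that it requires substantially more work than the theorem demands. Two minor inaccuracies: the coefficients are locally, not globally, Lipschitz (the cross term $\si(Q)S$ has a Lipschitz constant in $Q$ growing with $|S|$), though local Lipschitzness plus linear growth suffices; and your positivity argument ignores the truncation issue ($\si(x)\neq x$ for $x>M$), which the paper handles by a strong-Markov/stopping-time iteration.
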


\begin{remark}
Relation (\ref{eq:concentration-equilibrium}) can be interpreted in the following manner: assume that we observe a noise with intensity $\ep$. Then the kind of deviation we might expect from the noisy system (\ref{e:truncada}) with respect to the equilibrium $E_0$ is of order $\ep^{\vartheta }$ with $\vartheta =2\eta/\ka_3$. This range of deviation happens at a time scale of order $\ln(\rho^{-1})/\eta$.
\end{remark}

\smallskip

A second part of our analysis is then devoted to the more realistic delayed system, for which we obtain a result which is analogous to Theorem \ref{thm:concentration-equilibrium}:
\begin{theorem}\label{thm:concentration-equilibrium-delayed}
Equation \eqref{eq:concentration-equilibrium} still holds for the delayed system \eqref{e:truncada-delayed}, under some slightly more restrictive conditions on the initial condition which shall be specified at Hypothesis ~\ref{hyp:coeff-delay}.
\end{theorem}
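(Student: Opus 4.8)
The plan is to replicate the architecture of the proof of Theorem~\ref{thm:concentration-equilibrium}, isolating the single new ingredient—the delayed term $k\,b\,e^{-\mu\ze}\si(Q_{t-\ze}^\eps)S_{t-\ze}^\eps$—and showing that it can be absorbed into the existing estimates at the price of a mild tightening of the admissible initial data. I would first record that the $S^\eps$-equation in \eqref{e:truncada-delayed} is \emph{identical} to the one in \eqref{e:truncada}, and that setting $\ze=0$ turns the $Q^\eps$-drift of \eqref{e:truncada-delayed} into that of \eqref{e:truncada}; hence the deterministic equilibrium is still $E_0=(0,d/m)$ and the natural decay rates $\ga$ (for $S$) and $m$ (for $Q$) are unchanged, with $\eta=m/2\wedge\ga$ as before. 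As in the non-delayed case, I would split the analysis into a deterministic part and a stochastic concentration part.

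For the deterministic part, let $Z^0=(S^0,Q^0)$ denote the solution of \eqref{e:truncada-delayed} with $\eps=0$, now a delay differential equation with prescribed history on $[-\ze,0]$. The first step is to show that $Z^0$ converges to $E_0$ at exponential rate $\eta$. The mechanism is the same as in Theorem~\ref{thm:concentration-equilibrium}: once $Q^0_t$ is close to $d/m>\al/k$ the drift coefficient $\al-k\si(Q^0_t)$ is bounded above by $-\ga$, forcing $S^0_t$ to decay like $e^{-\ga t}$; once $S^0$ is small, both $k\si(Q^0_t)S^0_t$ and the delayed contribution $k\,b\,e^{-\mu\ze}\si(Q^0_{t-\ze})S^0_{t-\ze}$ are small, so the $Q^0$-equation reduces to $\di Q^0_t\approx(d-mQ^0_t)\di t$ and $Q^0_t\to d/m$ at rate of order $m$. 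The role of Hypothesis~\ref{hyp:coeff-delay} is precisely to constrain the initial history so that a good invariant region is preserved and the decay is not spoiled by the past values entering through the delay; here the attenuation factor $e^{-\mu\ze}<1$ is what keeps the delayed feedback from destabilizing the equilibrium.

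For the stochastic part, I would compare $Z^\eps$ to $Z^0$ on the interval $I=[\ka_1\ln(c/\rho)/\eta,\ka_2\ln(c/\rho)/\eta]$. Because the Stratonovich noise terms $\eps\si(S^\eps)\circ\di W^1$ and $\eps\si(Q^\eps)\circ\di W^2$ coincide with those of \eqref{e:truncada}, the exponential (Bernstein-type) martingale bounds used for Theorem~\ref{thm:concentration-equilibrium} apply essentially verbatim to control the fluctuation $\|Z^\eps-Z^0\|_{\infty,I}$. The genuinely new feature appears when one runs the Gronwall estimate on the difference $Z^\eps-Z^0$: the delayed drift produces, on the right-hand side, a term proportional to $|Z^\eps_{s-\ze}-Z^0_{s-\ze}|$, so the classical Gronwall lemma must be replaced by a delayed counterpart. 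Using the Lipschitz and boundedness properties of $\si$, and iterating the delayed inequality over the successive windows $[j\ze,(j+1)\ze]$, one recovers an exponential-in-time control whose rate degrades only by a factor depending on $\ze$, which can be reabsorbed into the constants $c,c_1$ and into the choice of $\la>\ka_3/\eta$. Combining the two parts exactly as in the non-delayed proof—$Z^0$ lies within $\rho$ of $E_0$ throughout $I$, while the fluctuation exceeds $\rho$ only with probability at most $\exp(-c_1\rho^{2+\la}/\ep^2)$—yields the bound \eqref{eq:concentration-equilibrium}.

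I expect the main obstacle to be the delayed Gronwall step together with the accompanying stability analysis of the deterministic delay equation: one must verify that the past values entering through $\si(Q_{t-\ze})S_{t-\ze}$ neither break the invariance of the good region nor deteriorate the exponential rate $\eta$ below what the time window $I$ requires. This is exactly the point at which the slightly more restrictive conditions of Hypothesis~\ref{hyp:coeff-delay} are needed, and it is the reason the analysis must rely on concentration techniques specifically tailored to delayed stochastic differential equations.
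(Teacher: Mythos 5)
Your proposal is correct and follows the same architecture as the paper: deterministic exponential stability under Hypothesis~\ref{hyp:coeff-delay} (invariant region as in Proposition~\ref{prop:inv-region-delay}, then rate $\eta=\ga\wedge m/2$ as in Theorem~\ref{thm:exp-cvgce-deterministic}), followed by a Gronwall-plus-exponential-martingale concentration estimate for $Z^\eps-Z^0$, with the two pieces combined on the window $I$ exactly as in the non-delayed case. The one place where you diverge is the treatment of the delayed term in the Gronwall step, and here the paper is lighter than what you propose. You suggest replacing Gronwall's lemma by a delayed counterpart, iterating over the windows $[j\ze,(j+1)\ze]$ and accepting a $\ze$-dependent degradation of the exponential rate. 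The paper instead exploits the fact that the stochastic and deterministic solutions are coupled through the \emph{same} initial segment, $(S^\eps_t,Q^\eps_t)=(S^0_t,Q^0_t)$ for $t\in[-\ze,0]$, so that the delayed difference integrals can simply be re-indexed and absorbed:
\begin{equation*}
\int_0^t \bigl|S^{\eps}_{s-\ze}-S^{0}_{s-\ze}\bigr|\,\di s
=\int_0^{t-\ze} \bigl|S^{\eps}_{s}-S^{0}_{s}\bigr|\,\di s
\le \int_0^{t} \bigl|S^{\eps}_{s}-S^{0}_{s}\bigr|\,\di s,
\end{equation*}
which turns \eqref{eq1delay}--\eqref{eq2delay} into an inequality of the standard Gronwall form, with only enlarged constants and no delayed lemma at all. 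Your windowed iteration does work (the recursion over windows yields a bound exponential in $t$ with a $\ze$-dependent rate, which is tolerable since the final bound already carries a factor $e^{\ka\tau}$ absorbed into the choice of $\la$ and $\rho_0$), but note that it secretly relies on the very same coupling of initial segments to start the induction on $[0,\ze]$; you should state this explicitly, as it is the key point the paper highlights. Two minor inaccuracies: your heuristic that the attenuation $e^{-\mu\ze}<1$ is what prevents destabilization is off the mark---Hypothesis~\ref{hyp:coeff-delay} in fact \emph{requires} $b\,e^{-\mu\ze}>1$ (it is used to keep $Q_t\ge d/m$), and stability instead comes from the exponential decay of $S$ feeding into the delayed term; and, consistently with the above, no concentration techniques ``specifically tailored to delayed equations'' are ultimately needed once the re-indexing trick is in place.
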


Theorem \ref{thm:concentration-equilibrium-delayed} can be seen as the main result of the current paper, and deserves some additional comments:

\smallskip

\noindent
\textbf{(1)} We have produced a concentration type result instead of a large deviation principle for equation (\ref{e:truncada}), because it seemed more adapted to our biological context. Indeed, in the current situation one wishes to know how far we might be from the desired equilibrium at a given fixed time, instead of producing asymptotic results as in the large deviation theory. At a technical level however, we rely on large deviation type tools, and in particular on an extensive use of exponential inequalities for martingales.

\smallskip

\noindent
\textbf{(2)} Let us compare our result with \cite{Ca,DGM}, which deal with closely related systems. The interesting article \cite{Ca} is concerned with a predator-prey system similar to ours, but it assumes that a linearization procedure around equilibrium in the highly nonlinear situation~(\ref{e:truncada}) can be performed. The analysis relies then heavily on this unjustified step. As far as \cite{DGM} is concerned, it roughly shows that if the noise intensity of the system is high enough, then HIV epidemics can be kept under control (in terms of exponential stability). This is a valuable information, but far away from our point of view which assumes a low intensity for the noise. We should also mention the related thorough deterministic studies~\cite{CPR,GK,Ku}

\smallskip

\noindent
\textbf{(3)} Mathematically speaking, it would certainly be interesting to play with the rich picture produced by equation (\ref{e:truncada2}) and its perturbed version in terms of stable and instable equilibrium. We have not delved deeper into this direction because it did not seem directly relevant to the biological problem at stake. It should be pointed out however that the analysis of our random dynamical system \eqref{e:truncada} is non standard due to the coefficient $d$, which accounts for the bacteriophage inoculation. Many of our considerations below will be devoted to handle this problem.

\smallskip

Our article is structured as follows: Section \ref{sec:preliminaries} is devoted to some preliminary considerations (existence and uniqueness results for our stochastic systems, convergence to equilibrium for the corresponding deterministic equations). Then we show our concentration results at Section \ref{sec:fluctuations}. Finally some simulations are lead at Section \ref{sec:numerics} in order to illustrate the theoretical results.

\section{Preliminaries}\label{sec:preliminaries}
In this section, we give some basic results concerning our competition system. We first establish existence and uniqueness for the solution to the perturbed system (\ref{e:truncada-delayed}), starting from the simpler system (\ref{e:truncada}). Then we deduce some properties for the equilibria of the deterministic counterpart of both systems  \eqref{e:truncada-delayed} and (\ref{e:truncada}).

\subsection{Existence, uniqueness and positivity of solution}

Recall that we are considering the perturbed problem (\ref{e:truncada-delayed}), with a coefficient $\si$ and some initial conditions of the following form:
\begin{hypothesis}\label{hyp:sigma}
The coefficients of our differential systems satisfy the following assumptions:

\smallskip

\noindent
\emph{(i)}
The function $\si:\R_+\to\R_+$ is such that $\si\in\cac^{\infty}$, and satisfies $\si(x)=x$ for $0\le x\le M$ and $\si(x)=M+1$ for $x>M+1$. We also assume that $0\le \si'(x)\le C$ for all $x\in\R_+$, with a constant $C$  such that $C>1$.

\smallskip

\noindent
\emph{(ii)}
As far as the initial condition is concerned, we assume that it is given as continuous positive functions $\{S_{0,\tau}, Q_{0,\tau}; -\zeta\le \tau\le 0\}$. In case of the non delayed system~\eqref{e:truncada2}, it is simply given by two positive constants $(S_0,Q_0)$.
\end{hypothesis}

Due to the fact that we have assumed a bounded coefficient $\si$, the existence and uniqueness of the solution to our differential system is a matter of standard considerations.
\begin{theorem}[Global existence of solution]
For any positive initial condition there exists a unique solution of \eqref{e:truncada-delayed}, which is defined for all $t\ge 0$.
\end{theorem}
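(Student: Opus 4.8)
The plan is to rewrite the Stratonovich system \eqref{e:truncada-delayed} in It\^o form, to remove the delay by the classical method of steps, and then to invoke standard well-posedness theory for the resulting sequence of non-delayed It\^o equations. Passing from $\eps\si(\cdot)\circ\di W$ to It\^o introduces the drift correction $\tfrac12\eps^2\si(\cdot)\si'(\cdot)\,\di t$; since $\si\in\cac^\infty$ is bounded with bounded derivatives (Hypothesis \ref{hyp:sigma}), this correction is bounded and globally Lipschitz, so it does not alter the structure of the coefficients. Because $\si$ is a priori only defined on $\R_+$, I would first extend it to a smooth bounded function on all of $\R$ with bounded derivative, solve on $\R^2$, and recover the original equation once positivity has been checked.

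On the first interval $[0,\ze]$ the delayed arguments $\si(Q_{t-\ze}^\eps)S_{t-\ze}^\eps$ are evaluated against the prescribed continuous initial data, hence they constitute a known adapted forcing term and the system reduces to a genuine two-dimensional It\^o SDE without delay. Iterating, on each interval $[n\ze,(n+1)\ze]$ the delayed term is read off from the solution already built on $[(n-1)\ze,n\ze]$. On every such step the coefficients are smooth, hence locally Lipschitz, so the usual Picard/contraction argument produces a unique maximal solution up to a possible explosion time, and uniqueness on the whole half-line follows by concatenating these steps.

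The only real point is to exclude explosion on each step, and this is exactly where the boundedness of $\si$ is decisive. The drift contains the bilinear terms $[\al-k\si(Q)]S$ and $k\si(Q)S$, which are merely locally Lipschitz; however the factors $\si(Q)$ and $\si(Q_{t-\ze})$ are bounded by $M+1$, so the full drift obeys a linear growth estimate $|b(x)|\le c(1+|x|)$, while the diffusion coefficients $\eps\si(S),\eps\si(Q)$ are outright bounded. A Gronwall-type moment bound then shows that the explosion time cannot be finite, so the maximal solution extends to the end of the interval; concatenation yields a global solution on $\R_+$. I expect this a priori estimate, rather than the local theory, to be the (fairly mild) crux, precisely because the truncation built into $\si$ is what keeps the otherwise bilinear drift of linear growth.

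It remains to verify that the solution stays in $\R_+^2$, which both legitimizes the extension of $\si$ and is required for the biological interpretation. Since $\si(0)=0$, every coefficient of the $S$-equation vanishes at $S=0$, so $\{S=0\}$ is an invariant set and a solution started from $S_0>0$ cannot reach it by uniqueness; for the second component, at $Q=0$ the diffusion vanishes while the drift equals $d+k\,b\,e^{-\mu\ze}\si(Q_{t-\ze})S_{t-\ze}\ge d>0$, which forces $Q$ back into the positive half-line. This completes the argument.
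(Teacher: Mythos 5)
Your core argument --- rewriting \eqref{e:truncada-delayed} in It\^o form, removing the delay by the method of steps, and obtaining global solvability on each step from locally Lipschitz coefficients plus the linear growth that the truncation $\si$ forces on the bilinear drift --- is correct, and it is in substance the paper's own proof: the paper simply observes that the coefficients are locally Lipschitz with linear growth and cites \cite[Section 5.2]{KS} for the non-delayed case and \cite{Mo} for the delayed one. Your method of steps is a legitimate, self-contained substitute for the reference to Mohammed's theory of stochastic functional differential equations. Note only that on every step after the first one the delayed term is a random (merely a.s.\ finite) adapted forcing, not a deterministic bounded one, so the Gronwall/non-explosion estimate should be run pathwise, or by induction on the moments of the previous step's solution; this is a routine adjustment.

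The genuine gap is in the positivity step, which your architecture makes load-bearing: since you extend $\si$ from $\R_+$ to $\R$, you must prove that the solution never visits the region where the extension is used, and both of your boundary arguments are ODE arguments that do not survive for SDEs. For $S^\eps$: restarting the system at the stopping time $\inf\{t:\,S_t^\eps=0\}$ and invoking (forward) pathwise uniqueness shows that $\{S=0\}$ is \emph{absorbing}, hence that $S_t^\eps\ge 0$ --- which is in fact all you need --- but it does not show that the solution ``cannot reach'' $\{S=0\}$; that conclusion would require uniqueness backward in time, which is not available for stochastic equations. For $Q^\eps$ the situation is worse: the assertion that a drift bounded below by $d>0$ at $\{Q=0\}$, together with a diffusion coefficient vanishing there, ``forces $Q$ back into the positive half-line'' cannot be established by inspecting the sign of the drift at a first hitting time, because the paths of $Q^\eps$ are not differentiable, and $\{Q=0\}$ is not invariant, so the uniqueness trick used for $S$ does not apply either. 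This is exactly the difficulty that the paper's Proposition \ref{prop:positivity-SQ} is built to overcome: there one first performs the change of variables $x_t=e^{-\eps W^1_t}S_t^\eps$, $y_t=e^{-\eps W^2_t}Q_t^\eps$ (legitimate thanks to the Stratonovich chain rule, for the untruncated system), which converts the equations into pathwise ODEs with random coefficients where the hitting-time/sign argument \emph{is} valid, and then handles the truncation $\si$ by a strong Markov patching over the stopping times $\tau^k_{M,S}$. To close your proof you should either import that argument, or replace your boundary reasoning by a one-dimensional comparison theorem (comparing $S^\eps$ with the zero solution and $Q^\eps$ with a nonnegative subsolution); as written, the final step of your proposal asserts the key fact rather than proving it.
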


\begin{proof}
It is readily checked that the coefficients of the equation are locally Lipschitz with linear growth. The existence and uniqueness of the solution is then a direct consequence of classical results (see e.g. \cite[Section 5.2]{KS} for the non delayed system and \cite{Mo} for the delayed one).

\end{proof}

\smallskip

Positivity of the solution is also an important feature, if we want the quantities $S_t,Q_t$ to be biologically meaningful. Moreover, part of our analysis will rely on this property, that we label for further use:
\begin{proposition}[Positivity]\label{prop:positivity-SQ}
If we take positive initial conditions $S_{0,t}\geq0$, $Q_{0,t}\geq0$ for all $t\in[-\ze,0]$ for the system \eqref{e:truncada-delayed}, then the solution fulfills $S_t^\eps\geq0$, $Q_t^\eps\geq0$ for all $t>0$.
\end{proposition}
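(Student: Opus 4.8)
The plan is to establish positivity for the two coordinates separately, since $S^\eps$ and $Q^\eps$ play very different roles at the boundary of the positive quadrant. I would first treat $S^\eps$, whose equation is multiplicative: recalling $\si(0)=0$, both the drift $[\al-k\si(Q_t^\eps)]S_t^\eps$ and the Stratonovich diffusion coefficient $\eps\si(S_t^\eps)$ vanish when $S_t^\eps=0$. Viewing the continuous adapted path $Q^\eps$ as a given input, the coefficients of the resulting one-dimensional equation for $S^\eps$ are locally Lipschitz, so pathwise uniqueness holds; since $S\equiv 0$ is itself a solution, two solutions cannot cross, and the nonnegative initial datum forces $S_t^\eps\ge 0$ for all $t$. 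Equivalently, on the set $\{S^\eps\le M\}$, where $\si(S^\eps)=S^\eps$, the Stratonovich chain rule yields the explicit representation $S_t^\eps=S_0\exp(\int_0^t[\al-k\si(Q_s^\eps)]\di s+\eps W_t^1)$, which is manifestly nonnegative.

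The component $Q^\eps$ is the delicate one, and I would handle it by induction over the successive intervals $[(j-1)\ze,j\ze]$, $j\ge 1$. On the first interval the delayed arguments $Q_{t-\ze}^\eps,S_{t-\ze}^\eps$ are the prescribed nonnegative initial data, and on each subsequent interval they are, by the induction hypothesis, already known to be nonnegative. Thus on each interval the production term $k\,b\,e^{-\mu\ze}\si(Q_{t-\ze}^\eps)S_{t-\ze}^\eps$ is a known nonnegative adapted process, say $g_t\ge 0$ (here I use $\si\ge 0$ together with $S^\eps\ge 0$ from the first step), and $Q^\eps$ solves on that interval the non-delayed equation $\di Q_t^\eps=[d-mQ_t^\eps-k\si(Q_t^\eps)S_t^\eps+g_t]\di t+\eps\si(Q_t^\eps)\circ\di W_t^2$.

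The crucial structural feature is the behaviour at the boundary $Q=0$: there the diffusion coefficient $\eps\si(Q_t^\eps)$ vanishes (as does its It\^o--Stratonovich correction, since $\si(0)\si'(0)=0$), the sign-indefinite interaction term $-k\si(Q_t^\eps)S_t^\eps$ also vanishes, and the drift reduces to $d+g_t\ge d>0$. Hence $0$ is an inward-pointing boundary carrying degenerate noise, and I would conclude $Q_t^\eps\ge 0$ by a localisation argument: applying It\^o's formula to a smooth convex approximation of the negative part $x\mapsto x^-$, one checks that the martingale part has zero expectation, that the drift acts in the favourable direction near the boundary, and that the second-order (local-time) term is controlled precisely by the vanishing of $\si$ at $0$; letting the approximation converge gives $\esp[(Q_t^\eps)^-]=0$. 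Iterating over the intervals yields $Q_t^\eps\ge 0$ for all $t$.

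The main obstacle is precisely this last step for $Q^\eps$. Unlike $S^\eps$, the $Q$-equation has no multiplicative structure and carries a sign-indefinite coupling term and a delay, so neither the non-crossing argument nor a one-dimensional SDE comparison theorem applies directly. The delay is neutralised by the interval induction, which freezes the delayed terms into a known nonnegative source $g_t$; the genuine analytic difficulty is then to show that the noise---nonzero away from the boundary---cannot drive $Q^\eps$ below $0$, and this rests entirely on the degeneracy $\si(0)=0$ of the diffusion coefficient combined with the strictly positive inoculation rate $d$ pushing inward at the boundary.
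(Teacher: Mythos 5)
Your proposal is correct, but it follows a genuinely different route from the paper's. The paper proceeds in two stages: it first treats the system with $\si(x)=x$, where the bilinear Stratonovich noise can be removed pathwise by the change of variables $x_t=e^{-\eps W_t^1}S_t^\eps$, $y_t=e^{-\eps W_t^2}Q_t^\eps$, turning the SDE into a random ODE; positivity of $x$ is then read off an explicit exponential formula, and positivity of $y$ follows from a first-zero contradiction ($y'_{t_0}>0$ at any first zero $t_0$, thanks to $d>0$ and the nonnegative delayed term --- the same inward-pointing mechanism you isolate). It then transfers the result to the truncated system by an iterated strong Markov/stopping-time argument, alternating between the levels $M/2$ and $M$ (between which $\si(S_t^\eps)=S_t^\eps$ is guaranteed), together with a proof that these stopping times tend to infinity. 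You instead work directly on the truncated SDE in one pass: for $S^\eps$, pathwise uniqueness plus the fact that $0$ solves the equation (all coefficients, including the It\^o--Stratonovich correction $\tfrac{\eps^2}{2}\si\si'$, vanish at $0$); for $Q^\eps$, the method of steps over the intervals $[(j-1)\ze,j\ze]$ to freeze the delayed term into a nonnegative source, followed by a Yamada--Watanabe-type estimate on a smooth convex approximation of $x\mapsto x^-$. What each approach buys: the paper's change of variables reduces everything to elementary ODE reasoning, but forces the two-stage structure and the stopping-time bookkeeping for the truncation; your route avoids that bookkeeping entirely and makes the treatment of the delay explicit (the paper handles it implicitly inside the contradiction argument), at the price of genuine stochastic-calculus work. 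In filling in the negative-part estimate you would need a Gronwall/localization step for the terms $-k\si(Q)S$ and $\tfrac{\eps^2}{2}\si\si'(Q)$, which are only sign-favorable near $Q=0$, and you must fix a smooth extension of $\si$ to negative reals, since the paper defines $\si$ only on $\R_+$ (any extension with $\si(0)=0$ works, as the conclusion is precisely that the negative region is never visited). One small imprecision: the exponential representation of $S^\eps$ is valid up to the first time $S^\eps$ reaches $M$, not ``on the set $\{S^\eps\le M\}$''; but since your primary argument for $S^\eps$ is the non-crossing one, this is harmless.
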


\begin{proof}
Let us first consider the system with $\sigma(x)=x$ for all $x$, namely:
\begin{equation}\label{e:SQ1}
\left\{
\begin{aligned}
\di S_t^{\eps} & =\left[\alpha-kQ_t^{\eps}\right]S_t^{\eps}\di t + \eps S_t^{\eps}\circ\di W^1_t\\
\di Q_t^{\eps} & =\left[d-mQ_t^{\eps}-kQ_t^{\eps}S_t^{\eps}+k \, b \, e^{-\mu \ze} Q_{t-\ze}^\eps S_{t-\ze}^\eps\right]\di t + \eps Q_t^{\eps}\circ\di W^2_t,
\end{aligned}
\right.
\end{equation}
with initial condition $(S_{0,t},Q_{0,t})$. Assuming existence and uniqueness of the solution to (\ref{e:SQ1}), we shall prove that $S_t^\eps,Q_t^\eps\ge 0$ for all $t\ge 0$ almost surely.

\smallskip

Indeed, after the change of variables $x_t=e^{-\eps W^1_t}S_t^{\eps}$, $y_t=e^{-\eps W^2_t}Q_t^{\eps}$, we can recast \eqref{e:SQ1} into the following system of differential equations with random coefficients:
\begin{equation}\label{e:SQ2}
\left\{
\begin{aligned}
x_t' & = \left(\alpha-ke^{\eps W^2_t}y_t\right)x_t\\
y_t' & = de^{-\eps W_t^2}-my_t-ke^{\eps W_t^1}x_ty_t+k\, b\, e^{-\mu \ze-\eps(W^2_t - W^2_{t-\ze} - W^1_{t-\ze})}y_{t-\ze}x_{t-\ze},
\end{aligned}
\right.
\end{equation}
with initial conditions $x^0(t)=S_{0,t}\geq0$, $y^0(t)=Q_{0,t}\geq0$ for all $t\in[-\ze,0]$.
Then, the positivity of $x_t$ is immediate from the representation
\[
x_t=x^0(0)\exp\left\{\int_0^t( \alpha-ke^{\eps W_s^2}y_s) \di s\right\} \geq0.
\]

In order to see the positivity of $y_t$ let us observe that for $y^0(0)=0$ we have $y_0'=d+k\, b\, e^{-\mu \ze-\eps(W^2_0 - W^2_{-\ze} - W^1_{-\ze})}y_{-\ze}x_{-\ze}>0$. Therefore, for all initial condition $y_0\geq0$ there exists $\de >0$ such that $y_t>0$ for all $t\in (0,\de)$.
Let us suppose now that $y_t<0$ for some $t>0$, and let $t_0=\inf\{t>0 \mid y_t<0\}$. Due to the continuity of the solution we have that $y_{t_0}=0$. Then $y'_{t_0}=de^{-\eps W^2_{t_0}}+k\, b\, e^{-\mu \ze-\eps(W^2_{t_0} - W^2_{t_0-\ze} - W^1_{t_0-\ze})}y_{t_0-\ze}x_{t_0-\ze}>0$, which is impossible since it would yield $y_t>0$ for $t\in (t_0,t_0+\de)$ for $\de$ small enough. This contradiction means exactly that $y_t\ge 0$ for all $t\ge 0$.

\smallskip

Now that we have the positivity for system \eqref{e:SQ1}, we can prove the positivity for \eqref{e:truncada-delayed} in the following way. Let us first handle the case of $S_t^\eps$, and assume that the initial condition is such that $S_{0,0}\geq M$. Set then
$
\tau^0_{M,S}=\inf\{t\geq0 \textrm{ such that } S_t^{\eps}\leq M/2\},
$
and observe that $\tau^0_{M,S}$ is a stopping time for the natural filtration of the Brownian motion $W$, such that $S^\eps$ has remained positive until $\tau^0_{M,S}$. Furthermore, the strong Markov property for $(S^{\ep},Q^{\ep})$ entails that the process
$$
\lcl \lp S^{\ep}_{\tau^0_{M,S}+t},Q^{\ep}_{\tau^0_{M,S}+t} \rp; \, t\ge 0\rcl
$$
also satisfies \eqref{e:truncada-delayed} on the set $\oom_{M,S}=\{\om\in\oom; \, \tau^0_{M,S} <\infty\}$, with an initial condition $S_{0,0}=M/2$. With these considerations in mind, we can assume that the initial condition of our differential system satisfies $S_{0,0}<M$.

\smallskip

With such an initial condition we can conclude the positivity of $S_t^\eps$ until the stopping time $\hat \tau^0_{M,S}=\inf\{t\geq0 \textrm{ such that } S_t^{\eps}\geq M\}$ as we have done for the system \eqref{e:SQ1}, since up to time $\hat \tau^0_{M,S}$ we have $\si(S_t^{\ep})=S_t^{\ep}$. Then, invoking again the strong Markov property, we can also guarantee positivity until time $\tau^1_{M,S}=\inf\{t\geq \hat\tau^0_{M,S}\textrm{ such that } S_t^{\eps}\leq M/2\}$ as above. We are now in a position to obtain the positivity of $S_t^\eps$ until time $\hat\tau^1_{M,S}=\inf\{t\geq \tau^1_{M,S} \textrm{ such that } S_t^{\eps}\geq M\}$, once again with the same reasoning than for the system \eqref{e:SQ1}. The global positivity of $S_t^{\ep}$ on any interval of the form $[\tau^k_{M,S}, \tau^{k+1}_{M,S}]$ for $k\ge 0$ now follows by iteration of this reasoning.

\smallskip

It remains to show that $\lim_{k\to\infty} \tau^k_{M,S}=\infty$. This is easily obtained by combining the following two ingredients:

\smallskip

\noindent
\textit{(i)} The increments $\{\tau^{k+1}_{M,S}-\tau^{k}_{M,S}; \, k\ge 0\}$ form a i.i.d sequence by a simple application of the strong Markov property.

\smallskip

\noindent
\textit{(ii)} Owing to the specific coefficients we have for equation \eqref{e:truncada-delayed}, it can be checked that for any $\eta_2>0$ one can find $\eta_1>0$ small enough such that $\bp(\tau^{1}_{M,S}> \eta_1)\ge 1-\eta_2$. Details of this assertion are omitted for sake of conciseness.

\smallskip

We let the reader check that the positivity of $Q_t^\eps$ can be obtained along the same lines, which ends the proof.

\end{proof}

\begin{remark}
Using the a priori positivity properties stated above, we could have also obtained existence and uniqueness of the solution for system (\ref{e:SQ1}). We did not include those developments for sake of conciseness.
\end{remark}

\subsection{Analysis of the deterministic non delayed system}
\label{sec:determ-non-delayed}

This section is devoted to the analysis of the non perturbed system corresponding to (\ref{e:truncada}). Namely, we shall consider the following dynamical system:
\begin{equation}\label{e:truncada2-non-delayed}
\left\{
\begin{aligned}
\di S_t & =\left[\alpha-k\sigma(Q_t)\right]S_t\di t\\
\di Q_t & = \left[d-mQ_t+k(b-1)\sigma(Q_t)S_t\right]\di t.
\end{aligned}
\right.
\end{equation}
We will give some sufficient conditions for the existence of a unique stable equilibrium $E_0$ and then show exponential convergence to this equilibrium.

\smallskip

Let us start with the basic results we shall need about equilibria of \eqref{e:truncada2-non-delayed}.

\begin{theorem}\label{T:Equilibris}
If either $M+1<\tfrac{\alpha}{k}$ or $M>\tfrac{\alpha}{k}$ and $\tfrac{kd}{m}\geq\alpha$, system \eqref{e:truncada2-non-delayed} has a unique (positive) steady state $E_0=(0,\tfrac{d}{m})$. Moreover, the bacteria-free equilibrium $E_0$ is asymptotically stable for $\tfrac{kd}{m}>\alpha$ and $M>\tfrac dm$.
\end{theorem}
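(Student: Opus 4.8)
The plan is to split the argument into two essentially independent parts: an algebraic determination of the fixed points, followed by a linearization analysis at $E_0$.

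For the equilibria, I would set both drift terms of \eqref{e:truncada2-non-delayed} to zero. The first equation $[\alpha-k\sigma(Q)]S=0$ forces a dichotomy: either $S=0$, or $\sigma(Q)=\alpha/k$. In the branch $S=0$, the second equation reduces to $d-mQ=0$, yielding the candidate $E_0=(0,d/m)$, whose second coordinate is positive. The remaining task is to rule out any other positive equilibrium arising from the branch $\sigma(Q)=\alpha/k$, and here I would exploit the shape of $\sigma$. Since $0\le\sigma\le M+1$, the equation $\sigma(Q)=\alpha/k$ has no solution at all when $M+1<\alpha/k$, which disposes of the first hypothesis. Under the second hypothesis $M>\alpha/k$, the value $\alpha/k$ lies in the region $[0,M)$ where $\sigma$ is the identity and strictly increasing (while $\sigma(Q)\ge M>\alpha/k$ for $Q>M$), so the unique solution is $Q=\alpha/k$. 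Substituting into the second equation and solving for $S$ gives $S=(m\alpha/k-d)/((b-1)\alpha)$, whose sign is governed by $m\alpha/k-d$; the condition $kd/m\ge\alpha$ forces $S\le0$, so this branch produces no admissible positive equilibrium distinct from $E_0$, the equality case merely recovering $E_0$ itself.

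For the stability claim, I would invoke the principle of linearized stability. Writing $f(S,Q)=[\alpha-k\sigma(Q)]S$ and $g(S,Q)=d-mQ+k(b-1)\sigma(Q)S$, I compute the Jacobian at $E_0$. The decisive simplification is that $S=0$ at $E_0$ annihilates both partial derivatives carrying a factor $\sigma'$, while the hypothesis $M>d/m$ guarantees $\sigma(d/m)=d/m$, so that $\sigma$ acts as the identity at that point. What remains is a lower triangular matrix whose diagonal entries $\alpha-kd/m$ and $-m$ are therefore its eigenvalues. Under $kd/m>\alpha$ and $m>0$, both are strictly negative, so $E_0$ is asymptotically stable.

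I expect the only delicate point to be the bookkeeping for uniqueness: one must check that the two hypotheses are precisely tailored so that $\alpha/k$ never falls in the transition interval $[M,M+1]$ where $\sigma$ is neither linear nor constant, and that the degenerate case $kd/m=\alpha$ does not spawn a genuinely new equilibrium but coincides with $E_0$. The linearization step is routine once these cases are cleanly separated.
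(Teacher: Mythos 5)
Your proposal is correct and follows essentially the same route as the paper's proof: the same dichotomy $\hat S=0$ versus $\sigma(\hat Q)=\alpha/k$ for the equilibria, the same sign computation $\hat S=(m\alpha-kd)/(k(b-1)\alpha)$ ruling out the second branch, and the same triangular Jacobian at $E_0$ with eigenvalues $\alpha-k\sigma(d/m)$ and $-m$ for stability. The additional care you take (the degenerate case $kd/m=\alpha$ collapsing onto $E_0$, and monotonicity of $\sigma$ excluding solutions of $\sigma(Q)=\alpha/k$ beyond $M$) only makes explicit points the paper leaves implicit.
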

\begin{proof}
To obtain the equilibria, we have to find the solutions of the following equation:
\begin{equation}\label{eq:non-delayed-equil}
\left\{
\begin{aligned}
0 & = (\alpha-k\sigma(\hat Q))\hat S \\
0 & = d - m\hat Q + k(b-1)\sigma(\hat Q)\hat S,
\end{aligned}
\right.
\end{equation}
where $\hat S$, $\hat Q$ are positive constants.

\smallskip

Owing to the first equation we have either $\hat S=0$ or $\alpha-k\sigma(\hat Q)=0$. Since $\hat S=0$ and the second equation imply $\hat Q=\tfrac dm$, we have that bacteria-free equilibrium $E_0$ exists for any value of the parameters. In the case $M+1<\frac\alpha k$ one can observe that no other equilibrium exists (since $\alpha-k\sigma(\hat Q)>0$ for any $\hat Q$).

\smallskip

Taking $M>\tfrac\alpha k$, $\alpha-k\sigma(\hat Q)=0$ if and only if $\hat Q=\tfrac\alpha k$. Then, using the second equation in \eqref{eq:non-delayed-equil}, we have
\[
0=d-m\frac\alpha k+(b-1)\alpha \hat S
\quad\Longrightarrow\quad
\hat S =\frac{m\alpha-kd}{k(b-1)\alpha},
\]
which is positive only for $\alpha>\tfrac{kd}{m}$. So we have proved the first part of the result.

\smallskip

For the second part, the Jacobian matrix of system \eqref{e:truncada2-non-delayed} at $E_0$ is
\[
A_0:=\begin{pmatrix}
\alpha-k\sigma(\frac dm)&0\\
k(b-1)\sigma(\frac dm)&-m
\end{pmatrix}.
\]
The eigenvalues of this matrix are easily shown to be $\lambda_0=\alpha-k\sigma(\frac dm)$ and $\lambda_1=-m$, which are negative for $\tfrac{kd}{m}>\alpha$ and $M>\tfrac dm$.

\end{proof}

We now wish to study the rate of convergence towards the $E_0$ equilibrium in the stable case (i.e., when $kd/m>\alpha$ and $M>\tfrac dm$). The main result we obtain to this respect is:
\begin{theorem}\label{thm:exp-cvgce-deterministic-nodelay}
Under Hypothesis \ref{hyp:coeff-sde} and \ref{hyp:sigma}, the solution of system \eqref{e:truncada2-non-delayed} with initial condition \[(S_0,Q_0)\in\left[0,\frac{mM-d}{k(b-1)M}\right]\times[d/m,M]\] exponentially converges to the equilibrium $E_0$:
\begin{equation}\label{eq:exp-cvgce-E0-nodelay}
|(S_t,Q_t)-E_0| \le c \, e^{-\eta t},
\quad\mbox{with}\quad
\eta= \ga \wedge \frac{m}{2},
\end{equation}
where we recall that $\ga=\tfrac{kd}{m}-\alpha>0$.
\end{theorem}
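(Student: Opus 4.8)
The plan is to establish exponential convergence by treating the two components in sequence, exploiting the triangular structure of the system: the $S$-equation decouples in the sense that its right-hand side does not depend on $Q$ through a feedback that prevents decay, while the $Q$-equation can then be analyzed as a linear ODE with a source term that is itself exponentially small. The key observation is that the initial region $R_0=\lc 0,\frac{mM-d}{k(b-1)M}\rc\times\lc d/m,M\rc$ is chosen precisely so that the trajectory remains in a regime where $\si(Q_t)=Q_t$ (since $d/m\le Q_t\le M$ keeps us below the truncation threshold), and where the sign of $\al-k\si(Q_t)$ is controlled.

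\emph{First} I would show that the region where $Q_t\in[d/m,M]$ is positively invariant, or at least that $Q_t$ stays in a band on which $\si$ acts as the identity; this is what lets me replace $\si(Q_t)$ by $Q_t$ throughout. Given $Q_t\ge d/m$, Hypothesis \ref{hyp:coeff-sde}(ii) gives $\al-kQ_t\le \al-kd/m=-\ga<0$, so from the first equation in \eqref{e:truncada2-non-delayed} we get
\begin{equation*}
S_t=S_0\exp\lcl \iot\lp \al-k\si(Q_s)\rp\di s\rcl\le S_0\, e^{-\ga t}.
\end{equation*}
This immediately yields the estimate $|S_t|\le c\, e^{-\ga t}$, so the $S$-component converges at rate at least $\ga\ge\eta$.

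\emph{Next}, having controlled $S$, I would turn to the deviation $Q_t-d/m$. Writing $q_t:=Q_t-d/m$ and using $\si(Q_t)=Q_t$, the second equation becomes
\begin{equation*}
q_t'=-m\,q_t+k(b-1)\,Q_t\,S_t,
\end{equation*}
which is a scalar linear ODE with decay rate $m$ driven by the source term $k(b-1)Q_tS_t$. Since $Q_t$ is bounded (by $M$, by invariance) and $S_t\le c\,e^{-\ga t}$, the source is bounded by $c\,e^{-\ga t}$. A variation-of-constants representation then gives
\begin{equation*}
|q_t|\le |q_0|\,e^{-mt}+\iot e^{-m(t-s)}\,c\,e^{-\ga s}\,\di s,
\end{equation*}
and the convolution integral is bounded by $c\,e^{-(m\wedge\ga)t}$ up to a possible polynomial factor when $m=\ga$. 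Choosing the rate $\eta=\ga\wedge\frac m2$ (rather than $\ga\wedge m$) absorbs any such borderline factor, since $e^{-(m\wedge\ga)t}\le c\,e^{-\eta t}$ comfortably when $\eta\le\frac m2$. Combining the two bounds gives $|(S_t,Q_t)-E_0|\le c\,e^{-\eta t}$ as claimed.

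\emph{The main obstacle} I anticipate is justifying the invariance of the region, i.e.\ verifying rigorously that the trajectory does not leave the band $[d/m,M]$ for $Q$ (and stays nonnegative in $S$), so that the identification $\si(Q_t)=Q_t$ is legitimate for all $t\ge0$ rather than merely up to some exit time. One checks the lower boundary $Q_t=d/m$ by noting that there $q_t'=k(b-1)(d/m)S_t\ge0$, so the flow points inward; the upper boundary $Q_t=M$ requires the upper bound on $S_0$ built into $R_0$ to ensure $q_t'=-m\,q_t+k(b-1)Q_tS_t\le 0$ there, which is exactly where the constraint $S_0\le\frac{mM-d}{k(b-1)M}$ enters. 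Handling the $m=\ga$ resonance in the convolution cleanly, and propagating the $S$-bound (which depends on $Q$ staying above $d/m$) in tandem with the invariance argument without circularity, are the delicate points; the rest is a routine Gronwall-type estimate.
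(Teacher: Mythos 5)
Your proposal is correct and follows essentially the same route as the paper: establish invariance of the region $R_0$ (so that $\si$ acts as the identity and $Q_t\ge d/m$), deduce $S_t\le S_0e^{-\ga t}$ from the sign of $\al-kQ_t$, and then treat $Q_t-d/m$ by variation of constants with an exponentially decaying source. If anything, you are slightly more careful than the paper, which writes the solution of the comparison ODE with a factor $\tfrac{c}{m-\ga}$ and thus implicitly assumes $m\neq\ga$, whereas you explicitly note that the choice $\eta=\ga\wedge\tfrac m2$ absorbs the resonance case $m=\ga$.
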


\begin{proof}

In order to prove our claim, we first have to show that the region $R:=[0,\frac{mM-d}{k(b-1)M}]\times[\frac dm,M]\subset[0,M]^2$ is left invariant by equation \eqref{e:truncada2-non-delayed}. Towards this aim, we can invoke the same method we will use in Proposition \ref{prop:inv-region-delay}, and we let the reader check the details.

Now, since we have $Q_t\leq M$ for all $t$, we can consider $\sigma(x)=x$ in equation \eqref{e:truncada2-non-delayed}. We will consider a version of this system centered at $E_0$ by means of the change of variables $\tilde{S}=S$, $\tilde{Q}=Q-d/m$. This leads to the system

\begin{equation}\label{e:SQ_origin}
\left\{
\begin{aligned}
\tilde{S}_t' & = -\ga \tilde{S}_t-k\tilde{Q}_t\tilde{S}_t\\
\tilde{Q}_t' & = -m\tilde{Q}_t+\frac{kd}{m}(b-1)\tilde{S}_t+k(b-1)\tilde{Q}_t\tilde{S}_t.
\end{aligned}
\right.
\end{equation}
Notice that, according to our set of assumptions concerning the initial conditions, we have $\tilde{S}_0\geq0$ and $\tilde{Q}_0\geq0$. Thus  the solution to (\ref{e:SQ_origin}) will remain positive for all $t>0$ (it can be deduced from $R$ being invariant, or can be proved just like in Proposition \ref{prop:positivity-SQ}).

\smallskip

Now, from the first equation in (\ref{e:SQ_origin}), we have that $\tilde{S}_t'\leq-\ga \tilde{S}_t$. This implies $\tilde{S}_t\leq\tilde{S}_0e^{-\ga t}$, proving that $\tilde{S}_t$ exponentially converges to zero.

\smallskip

Owing to the second equation in (\ref{e:SQ_origin}) and using positivity properties of the solution, we also get
\[
\tilde{Q}_t' \leq -m\tilde{Q}_t+k(b-1)\tilde{S}_0e^{-\gamma t} \lp \frac{d}{m}+\tilde{Q}_t \rp.
\]

Finally, the variation of constants method will lead to the stated result, following the same steps we will detail later in the proof of Theorem \ref{thm:exp-cvgce-deterministic}.

\end{proof}

\subsection{Analysis of the deterministic delayed system}
We now try to generalize the results of Section \ref{sec:determ-non-delayed} to our deterministic delayed system \eqref{e:truncada2}. To this aim, we shall work under the following assumptions.
\begin{hypothesis}\label{hyp:coeff-delay}
We will suppose that the coefficients of equation (\ref{e:truncada}) satisfy the following conditions, valid for  any $t\in[-\ze,0]$:

\smallskip

\noindent
\emph{(i)} The initial condition $(S_{0,t},Q_{0,t})$ of the system lies into the region
$$
R_0:=\left[0,M\right]\times\lc \frac{d}{m},M\rc.
$$

\smallskip

\noindent
\emph{(ii)}
We have $b\, e^{-\mu\ze}Q_{0,t} S_{0,t}> \frac dm S_{0,0}$, and $b\, e^{-\mu\ze} >1$.

\smallskip

\noindent
\emph{(iii)}
The condition $S_{0,t}<\frac{mM-d}{kbe^{-\mu\ze}M}$ is satisfied.
\end{hypothesis}

\smallskip

A first step towards exponential stability is then the invariance of a certain region under our dynamical system:
\begin{proposition}\label{prop:inv-region-delay}
Under Hypothesis \ref{hyp:coeff-sde}, \ref{hyp:sigma} and \ref{hyp:coeff-delay}, the region
\begin{equation*}
R:=\left[0,\frac{mM-d}{kbe^{-\mu\ze}M}\right]\times\left[\frac dm,M\right]\subset[0,M]^2
\end{equation*}
is left invariant by equation \eqref{e:truncada2}.
\end{proposition}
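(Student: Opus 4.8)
The plan is to run a single first-exit-time argument. Since the target region satisfies $R\subset[0,M]^2$, as long as the solution stays in $R$ we have $\si(Q_t)=Q_t$ and $\si(Q_{t-\ze})=Q_{t-\ze}$, so equation \eqref{e:truncada2} reduces on $R$ to the untruncated system
\[
S_t'=(\al-kQ_t)S_t,\qquad Q_t'=d-mQ_t-kQ_tS_t+kb\,e^{-\mu\ze}Q_{t-\ze}S_{t-\ze}.
\]
By Hypothesis \ref{hyp:coeff-delay} the whole history $\{(S_{0,t},Q_{0,t});-\ze\le t\le0\}$ lies in $R$ (condition (iii) places $S_{0,t}$ below the right endpoint, and (i) places $Q_{0,t}$ in $[d/m,M]$). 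Set $\tau:=\inf\{t\ge0:(S_t,Q_t)\notin R\}$; by continuity the solution stays in $R$ on $[0,\tau)$ and $(S_\tau,Q_\tau)\in\partial R$ if $\tau<\infty$. I will show that on every face of $\partial R$ that could be reached the vector field forbids leaving, forcing $\tau=\infty$.

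First I treat the $S$-direction, which never produces an exit. From the representation $S_t=S_0\exp\{\int_0^t(\al-kQ_s)\di s\}$ we get $S_t\ge0$, so the face $S=0$ is non-exiting. Moreover, for $t\in[0,\tau)$ we have $Q_t\ge d/m$, hence $\al-kQ_t\le\al-kd/m=-\ga<0$ by Hypothesis \ref{hyp:coeff-sde}(ii), so that $S_t'\le-\ga S_t\le0$: the component $S$ is nonincreasing on $[0,\tau)$. Consequently $0\le S_t\le S_0<\frac{mM-d}{kb\,e^{-\mu\ze}M}$ throughout, so the solution stays strictly inside the $S$-range and can only exit through one of the horizontal faces $Q=d/m$ or $Q=M$.

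Next I rule out both $Q$-faces. On the top face, suppose $Q_t=M$. The delayed values $(S_{t-\ze},Q_{t-\ze})$ lie in $R$ — either genuine solution values (for $t\ge\ze$, via the monotonicity of $S$ and the bounds just established) or initial data (for $t<\ze$, by Hypothesis \ref{hyp:coeff-delay}) — so $Q_{t-\ze}\le M$ and $S_{t-\ze}\le\frac{mM-d}{kb\,e^{-\mu\ze}M}$, whence $kb\,e^{-\mu\ze}Q_{t-\ze}S_{t-\ze}\le mM-d$. Plugging this in gives $Q_t'\le d-mM-kMS_t+(mM-d)=-kMS_t\le0$, which prevents an upward crossing (the right endpoint of the $S$-interval is calibrated exactly for this bound). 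On the bottom face, suppose $Q_t=d/m$; then $Q_t'=-\tfrac{kd}{m}S_t+kb\,e^{-\mu\ze}Q_{t-\ze}S_{t-\ze}$. Here I use three facts: $Q_{t-\ze}\ge d/m$, the monotonicity $S_{t-\ze}\ge S_t$ (for $t\ge\ze$; for $t<\ze$ this role is played by the initial-condition inequality in Hypothesis \ref{hyp:coeff-delay}(ii)), and $b\,e^{-\mu\ze}>1$. Combining them yields $kb\,e^{-\mu\ze}Q_{t-\ze}S_{t-\ze}>k\tfrac{d}{m}S_t$, i.e. $Q_t'>0$, so the solution cannot cross below $d/m$ either. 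A standard contradiction at $\tau$ (the solution being on a $Q$-face with the derivative pointing inward) then gives $\tau=\infty$.

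The delicate point is the bottom face, and more generally the fact that the delay prevents a direct application of a Nagumo-type subtangentiality criterion: the inward-pointing condition at time $t$ involves the past values $(S_{t-\ze},Q_{t-\ze})$. The crux is that the production term $kb\,e^{-\mu\ze}Q_{t-\ze}S_{t-\ze}$ must dominate the consumption term $\tfrac{kd}{m}S_t$, which I obtain by coupling the monotone decay of $S$ (itself a consequence of $Q\ge d/m$) with the amplification factor $b\,e^{-\mu\ze}>1$. Since the monotonicity of $S$ and the lower bound $Q\ge d/m$ are interdependent, the single first-exit-time formulation is precisely what avoids circularity: on $[0,\tau)$ both properties hold simultaneously by the very definition of $\tau$, so the two boundary estimates can be closed together.
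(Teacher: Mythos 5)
Your proof is correct and takes essentially the same route as the paper's: the paper likewise reduces to the untruncated system on $R$, uses $Q_t\ge d/m$ to make $S$ non-increasing, and rules out the faces $Q=d/m$ and $Q=M$ by sign arguments on $Q'$ at a first hitting time, with the same case split $t\le\zeta$ versus $t>\zeta$ and the same use of Hypothesis~\ref{hyp:coeff-delay}(ii)--(iii); the only difference is that you merge the paper's two sequential hitting-time arguments (bottom face first, then top face) into a single exit time $\tau$. One cosmetic point: on the face $Q=M$ you should carry the strict bound $S_{t-\zeta}<\frac{mM-d}{kbe^{-\mu\zeta}M}$ (which you already established for both solution values and initial data) through the estimate, so as to get $Q'_t<0$ strictly, since the non-strict conclusion $Q'_t\le 0$ by itself does not deliver the contradiction at $\tau$.
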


\begin{proof}
We separate the analysis of $S$ and $Q$ in two steps.

\smallskip

\noindent {\it Step 1: boundedness of $S$.} Since $S$ is obviously positive (along the same lines as for equation \eqref{e:SQ2}) and owing to the fact that $S^{\prime}_{t}=\left(\alpha-k\sigma(Q_t)\right)S_t$ we obtain that
\begin{equation*}
S^{\prime}_{t}\leq 0  \textrm{ whenever }  Q_{t}>\frac\alpha k ,
\quad\mbox{and}\quad
S^{\prime}_{t}\geq 0  \textrm{ whenever }  Q_{t}<\frac\alpha k.
\end{equation*}
Furthermore, our system starts from an initial condition $Q_{0,0}\geq\frac dm>\frac\alpha k$. Thus $S$ is non increasing as long as $Q$ remains in the interval $[\frac dm,\infty)$.

\smallskip

Let us now observe what happens in the limiting case $Q_{0,0}=\frac dm$: recalling that our initial conditions are denoted by $S_{0,t}, Q_{0,t}$ for $t\in[-\ze,0]$, we have
$$Q^{\prime}_{0}=-k\frac dm S_{0,0}+kbe^{-\mu\ze}\sigma(Q_{0,-\zeta})S_{0,-\zeta}
=k \lp be^{-\mu\ze}Q_{0,-\zeta}S_{0,-\zeta}-\frac dm S_{0,0} \rp >0,
$$
where we have used the fact that $be^{-\mu\ze}Q_{0,-\zeta}S_{0,-\zeta}>\frac dm S_{0,0}$. According to this inequality, we obtain the existence of a strictly positive $\ep$ such that $Q_{t}>\frac dm$ for all $t\in(0,\varepsilon)$. We thus introduce the quantity $t_0=\inf\{t>0:\,\,Q_{t}=\frac dm\}$, and notice that we have
$$
Q^{\prime}_{t_{0}}=-k\frac dm S_{t_{0}}+kbe^{-\mu\ze}\sigma(Q_{t_{0}-\zeta})S_{t_{0}-\zeta}.$$
We can now distinguish two cases:
\begin{enumerate}
\item If $t_0>\ze$, since $S_{t}$ is non-increasing in $[0,t_0]$,
$S_{t_{0}-\zeta}\geq S_{t_{0}}$ and hence
$$Q^{\prime}_{t_{0}}\geq kS_{t_{0}}\left(be^{-\mu\ze}\sigma(Q_{t_{0}-\zeta})-\frac dm \right)>0,$$
due to the fact that $be^{-\mu\ze}>1$, $M>\frac dm$ and $Q_{t_{0}-\zeta}>\frac dm$.
\item If $t_0\leq \ze$, since $S_{t_{0}}\leq S_{0,0}$ we obtain
\begin{equation*}
Q^{\prime}_{t_{0}}\geq-k\frac dm S_{0,0}+kbe^{-\mu\ze}\sigma(Q_{0,t_{0}-\zeta})S_{0,t_{0}-\zeta}
=k\left(be^{-\mu\ze}Q_{0,t_{0}-\zeta}S_{0,t_{0}-\zeta}-\frac dm S_{0,0}\right)>0,
\end{equation*}
where we have used the fact that $be^{-\mu\ze}Q_{0,t}S_{0,t}>\frac dm S_{0,0}$ for all $t\in[-\ze,0]$.
\end{enumerate}
This discussion allows thus to conclude that $t_0$ cannot be a finite time. Indeed, we should have $Q^{\prime}_{t_{0}}>0$ and hence $Q$ increasing in a neighborhood of $t_0$, while $Q$ should be decreasing in a neighborhood of $t_0$ according to its very definition. We have thus reached the following partial conclusion:
\begin{equation*}
Q_{t} \ge \frac dm, \quad t\mapsto S_t \mbox{ decreasing}, \quad S_t\ge 0.
\end{equation*}
In particular, any interval of the form $[0, L]$ for $L\ge 0$ is left invariant by $t\mapsto S_t$.

\smallskip

\noindent {\it Step 2: boundedness of $Q$.}  Our claim is now reduced to prove that for $(S_{0,t},Q_{0,t})\in R$ we have $Q_t\le M$ for all $t\ge 0$.

\smallskip

To this aim notice that, whenever $Q_{0,0}=M$ we have
 \begin{eqnarray*}
Q^{\prime}_{0}&=&d-mM-kMS_{0,0}+kbe^{-\mu\ze}\sigma(Q_{0,-\zeta})S_{0,-\zeta}\\&\leq&
d-mM+kbe^{-\mu\ze}MS_{0,-\zeta}<0,
\end{eqnarray*}
where we recall that $S_{0,-\zeta}<\frac{mM-d}{kbe^{-\mu\ze}M}$ according to Hypothesis \ref{hyp:coeff-delay}. This yields the existence of $\varepsilon>0$ such that $Q_{t}<M$ for all $t\in(0,\varepsilon)$.

\smallskip

We now define $t_1=\inf\left\{t>0:\,\,Q_{t}=M\right\}$. It is readily checked that
\begin{eqnarray*}
Q^{\prime}_{t_1}&=&d-mM-kM S_{t_1}+kbe^{-\mu\ze}\sigma(Q_{t_{1}-\zeta})S_{t_{1}-\zeta}\\
&=&d-mM-kM S_{t_1}+kbe^{-\mu\ze}Q_{t_{1}-\zeta}S_{t_{1}-\zeta}\\
&\leq&d-mM+kbe^{-\mu\ze}MS_{t_{1}-\zeta},
\end{eqnarray*}
and we can distinguish again two cases:
\begin{enumerate}
\item If $t_1>\zeta$, thanks to the fact that $t\mapsto S_{t}$ is non-increasing on $[0,t_1]$, we have
$$
Q^{\prime}_{t_1}\leq d-mM+kbe^{-\mu\ze}MS_{0,0}<0,
$$
since we have assumed that $S_{0,0}<\frac{mM-d}{kbe^{-\mu\ze}M}$.
\item If $t_1\leq \zeta$ then
$$
Q^{\prime}_{t_1}\leq d-mM+kbe^{-\mu\ze}MS_{0,t_{1}-\zeta}<0,
$$
thanks to the fact that $S_{0,t}<\frac{mM-d}{kbe^{-\mu\ze}M}$ for all $t\in[-\ze,0]$.
\end{enumerate}
As for the discussion of the previous step, this allows thus to conclude that $t_1$ cannot be a finite time, due to the contradiction $Q^{\prime}_{t_1}<0$ and $Q_{t}<Q_{t_1}$ for all $t\in(0,t_1)$. We have thus shown $Q_t\le M$ for all $t\ge 0$, which finishes the proof.

\end{proof}

\begin{remark}Before stating the exponential convergence to the bacteria-free equilibrium result, let us observe that Theorem \ref{T:Equilibris} still holds true for the delayed system \eqref{e:truncada2}. It can be easily checked using exactly the same steps we have done for the non-delayed system.
\end{remark}
We are now ready to state our result on exponential convergence of the delayed dynamics:
\begin{theorem}\label{thm:exp-cvgce-deterministic}
Assume Hypothesis \ref{hyp:coeff-sde}, \ref{hyp:sigma}, and \ref{hyp:coeff-delay} are satisfied, and let $R$ be the region defined at Proposition \ref{prop:inv-region-delay}. Then the solution of system \eqref{e:truncada2} with initial condition $(S_0,Q_0)\in R$ exponentially converges to the equilibrium $E_0$:
\begin{equation}\label{eq:exp-cvgce-E0}
|(S_t,Q_t)-E_0| \le c \, e^{-\eta t},
\quad\mbox{with}\quad
\eta= \ga \wedge \frac{m}{2},
\end{equation}
where we recall that $\ga=\tfrac{kd}{m}-\alpha>0$.
\end{theorem}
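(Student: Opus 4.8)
The plan is to mirror the strategy of Theorem \ref{thm:exp-cvgce-deterministic-nodelay}, adapting it to accommodate the delay. First I would invoke Proposition \ref{prop:inv-region-delay}: since the region $R$ is invariant and the initial data lie in $R$, we have $(S_t,Q_t)\in R\subset[0,M]^2$ for every $t\ge -\ze$. In particular $Q_t\le M$ at all times (including on $[-\ze,0]$), so that $\si(Q_t)=Q_t$ and $\si(Q_{t-\ze})=Q_{t-\ze}$ throughout, and we may work with the untruncated equation. From the proof of Proposition \ref{prop:inv-region-delay} I would also record that $Q_t\ge d/m$, that $S_t\ge 0$, and that $t\mapsto S_t$ is non-increasing. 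Then I center the system at $E_0$ via $\tilde S_t=S_t$ and $\tilde Q_t=Q_t-d/m\ge 0$.

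The $S$-component is immediate: since $\alpha-kQ_t=-\ga-k\tilde Q_t$, the first equation becomes $\tilde S_t'=(-\ga-k\tilde Q_t)\tilde S_t$, and because $\tilde Q_t\ge 0$ and $\tilde S_t\ge 0$ this yields $\tilde S_t'\le -\ga\tilde S_t$, hence $S_t=\tilde S_t\le S_0 e^{-\ga t}$. This already gives exponential decay of $S$ at rate $\ga$.

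For the $Q$-component I would use variation of constants, exactly as announced. Substituting $Q_t=\tilde Q_t+d/m$ and dropping the nonpositive term $-kQ_tS_t$ (recall $Q_t\ge d/m>0$, $S_t\ge0$) gives, for the centered equation, $\tilde Q_t'\le -m\tilde Q_t+kb\,e^{-\mu\ze}Q_{t-\ze}S_{t-\ze}$. I then split the time axis. On the compact interval $[0,\ze]$ the delayed factors $Q_{t-\ze}S_{t-\ze}$ are evaluated on the bounded continuous initial data, so $\tilde Q_t$ stays bounded there by a constant, which furnishes the value $\tilde Q_\ze$. For $t\ge\ze$ I bound $Q_{t-\ze}\le M$ and use the $S$-estimate $S_{t-\ze}\le S_0 e^{-\ga(t-\ze)}=S_0 e^{\ga\ze}e^{-\ga t}$, obtaining $\tilde Q_t'\le -m\tilde Q_t+C_1 e^{-\ga t}$ with $C_1=kb\,e^{-\mu\ze}M S_0 e^{\ga\ze}$. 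Multiplying by the integrating factor $e^{mt}$ and integrating from $\ze$ to $t$ produces $\tilde Q_t\le \tilde Q_\ze e^{-m(t-\ze)}+C_1 e^{-mt}\int_\ze^t e^{(m-\ga)s}\,\di s$.

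The final step is to read off the decay rate, and here lies the only delicate point. When $\ga\ne m$ the integral evaluates to a combination of $e^{-\ga t}$ and $e^{-m t}$ terms, whence $\tilde Q_t\le c\,e^{-(\ga\wedge m)t}$; when $\ga=m$ the integral equals $t-\ze$, producing a resonant factor and a bound of the form $c(1+t)e^{-mt}$. In every case this is dominated by $c\,e^{-\eta t}$ with $\eta=\ga\wedge\tfrac m2$: off resonance the genuine exponent $\ga\wedge m$ already exceeds $\eta$, while in the resonant case the polynomial factor is absorbed through $t\,e^{-mt}\le c\,e^{-mt/2}$ since then $\eta=\tfrac m2$. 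Combining with $S_t\le S_0 e^{-\ga t}\le S_0 e^{-\eta t}$ yields $|(S_t,Q_t)-E_0|=|(\tilde S_t,\tilde Q_t)|\le c\,e^{-\eta t}$, as claimed. The main obstacle is precisely the interplay of the delay with the exponential estimate — feeding the already established decay of $S$ into the delayed forcing term and controlling the transient on $[0,\ze]$ — together with the resonant case $\ga=m$, which is exactly why the statement is phrased with $\tfrac m2$ rather than $m$.
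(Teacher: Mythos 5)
Your proof is correct and follows essentially the same route as the paper's: invariance of $R$ (Proposition \ref{prop:inv-region-delay}) to drop the truncation $\si$, centering at $E_0$, the bound $S_t\le S_0e^{-\ga t}$, and a variation-of-constants (integrating factor) estimate for $\tilde Q$. You are in fact slightly more careful than the paper on two points it glosses over: the transient interval $[0,\ze]$, where the delayed terms still involve the initial data rather than the decay estimate for $S$, and the resonant case $\ga=m$, where the paper's explicit solution divides by $m-\ga$ — which is exactly the reason the statement carries $\eta=\ga\wedge\tfrac m2$ instead of $\ga\wedge m$.
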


\begin{proof}
According to Proposition \ref{prop:inv-region-delay}, we have $Q(t)\leq M$ for all $-\zeta \leq t<\infty$ under our standing assumptions.  Hence one can recast equation \eqref{e:truncada2} as
$$
\left\{\begin{array}{l}
\di S_t=\left(\alpha-kQ_t\right)S_t dt\\
\di Q_t=\left(d-mQ_t-kQ_tS_t+kbe^{-\mu\ze}Q_{t-\ze }S_{t-\ze }\right)dt
\end{array}
\right.
$$
Let us perform now the change of variables $\tilde Q=Q-\frac dm$. This transforms the previous system into
$$\left\{\begin{array}{l}
\di S_t=\left(\alpha-k(\tilde Q_t+\frac dm)\right)S_t \, \di t\\
d\tilde Q_t=\left(d-m(\tilde Q_t+\frac dm)-k(\tilde Q_t+\frac
dm)S_t+kbe^{-\mu\ze}(\tilde Q_{t-\ze }+\frac dm)S_{t-\ze }\right)\, \di t.
\end{array}
\right.$$
Equivalently, our new system is:
$$\left\{\begin{array}{l}
\di S_t=- \lp \gamma S_t + k\tilde Q_tS_t \rp \, \di t\\
\di {\tilde Q}_t=\lp -m\tilde Q_t-k\frac dm S_t-k\tilde Q_tS_t+k\frac
dm be^{-\mu\ze}S_{t-\ze }+kbe^{-\mu\ze}\tilde Q_{t-\ze }S_{t-\ze } \rp \, \di t.
\end{array}
\right.$$
Observe now that Proposition \ref{prop:inv-region-delay} asserts that $Q_{t}\geq \frac dm$ for all $t\ge 0$, which means that $\tilde{Q}_{t}\geq0$. With our change of variables, we have also shifted our equilibrium to the point $(0,0)$. We now wish to prove that $S_t$ and $\tilde
Q_t$ exponentially converge to 0.

\smallskip

The bound on $S_t$ is easily obtained: just note that
$$
\di S_t \leq-\gamma S_t \, \di t,
$$
which yields $S_t\leq S_{0,0}\, e^{-\gamma t}$. As far as $\tilde Q_t$ is concerned, one gets the bound
\begin{eqnarray*}
\frac{\di \tilde Q_t}{\di t} &\leq& -m\tilde Q_t+k\frac dm be^{-\mu\ze}S_{0,0}\, e^{-\gamma(t-\ze )}+kbe^{-\mu\ze}\tilde Q_{t-\ze }S_{0,0}\, e^{-\gamma(t-\ze )}\\
&\leq& -m\tilde Q_t+kbe^{-\mu\ze}S_{0,0}\, e^{-\gamma(t-\ze )}\left(\frac dm + M-\frac dm\right)\\
&=& -m\tilde Q_t + c\, e^{-\gamma t},
\end{eqnarray*}
with $c=kbMS_{0,0}\, e^{(\gamma-\mu)\ze}$, and where we have used
the fact that $Q_t\le M$ uniformly in $t$.

\smallskip

Invoking now the variation of constant method, it is readily checked that equation $\dot x_t=-mx_t+c \, e^{-\gamma t}$ with initial condition $x_0=\tilde Q_{0,0}$ can be explicitly solved as
\begin{eqnarray*}
x(t)&=&e^{-mt}\left(\tilde Q_{0,0} +\frac c {m-\gamma}\left(e^{(m-\gamma)t}-1\right)\right)\\
&=&\left(\tilde Q_{0,0} -\frac c{m-\gamma}\right)e^{-mt}+\frac
c{m-\gamma}e^{-\gamma t}.
\end{eqnarray*}
By comparison, this entails the inequality $\tilde Q_{t}\leq c_1\, e^{-\eta t}$, where $c_1=\max(\tilde Q_{0,0} -\frac c{m-\gamma},\frac c{m-\gamma}
)$ and $\eta=m\wedge \gamma$. Our proof is now finished.

\end{proof}

\section{Fluctuations of the random system}\label{sec:fluctuations}

Let us summarize the information we have obtained up to now in the non delayed case: we are considering the system
\begin{equation}\label{E:perturbation}
\left\{
\begin{aligned}
\di S_t^\eps & =\left[\alpha-k\sigma(Q_t^\eps)\right]S_t^\eps\di t + \eps\sigma(S_t^\eps)\circ\di W^1_t\\
\di Q_t^\eps & = \left[d-mQ_t^\eps+k(b-1)\sigma(Q_t^\eps)S_t^\eps\right]\di t + \eps\sigma(Q_t^\eps)\circ\di W^2_t.
\end{aligned}
\right.
\end{equation}
Under Hypothesis \ref{hyp:coeff-sde} and \ref{hyp:sigma}, we have shown the existence of a unique equilibrium $E_0=(0,d/m)$ for the deterministic system (\ref{e:truncada2-non-delayed}), corresponding to (\ref{E:perturbation}) with $\eps=0$. Furthermore, we have constructed a region $R\in\R_+^2$ such that for any initial condition $(S_0,Q_0)\in R$, the solution converges exponentially to $E_0$, with a rate $\eta=\ga \wedge \frac{m}{2}$. We now wish to obtain a concentration result for the perturbed system (\ref{E:perturbation}), that is give a proof of Theorem \ref{thm:concentration-equilibrium}. To this aim, we shall divide our proof in several subsections.

\begin{notation}
We will set $Z_t^{\eps}$ for the couple $(S_t^{\eps}, Q_t^{\eps})$, and $Z_t^{0}$ for the solution to the deterministic equation (\ref{e:truncada2-non-delayed}).
\end{notation}

\subsection{Reduction of the problem}
Recall that Theorem \ref{thm:concentration-equilibrium} states an exponential bound (valid for $\rho$ small enough) of the form
\begin{equation}\label{eq:concentration-equilibrium-2}
\bp\lp  \|Z^{\eps}-E_0\|_{\infty,I} \ge 2 \rho \rp  \le
\exp\lp -\frac{c_1 \rho^{2+\la}}{\ep^2}  \rp,
\end{equation}
on any interval of the form $I=[\ka_1 \ln(c/\rho)/\eta; \ka_2 \ln(c/\rho)/\eta]$ and $1<\ka_1<\ka_2<\ka_3$ such that $\la>\ka_3/\eta$.

\smallskip

A first step in this direction is to consider a generic interval of the form $\hi=[a,b]$, and write
\begin{multline*}
\bp\lp  \|Z^{\eps}-E_0\|_{\infty,\hi} \ge 2 \rho \rp
=
\bp\lp  (\|Z^{\eps}-E_0\|_{\infty,\hi} \ge 2 \rho) \cap  (\|Z^{0}-E_0\|_{\infty,\hi} \ge  \rho)\rp \\
+\bp\lp  (\|Z^{\eps}-E_0\|_{\infty,\hi} \ge 2 \rho) \cap  (\|Z^{0}-E_0\|_{\infty,\hi} \le  \rho)\rp,
\end{multline*}
which yields
\begin{equation*}
\bp\lp  \|Z^{\eps}-E_0\|_{\infty,\hi} \ge 2 \rho \rp
\le
 A_1+A_2,
\end{equation*}
with
\begin{equation}\label{eq:def-A1-A2}
A_1= \bp\lp  \|Z^{0}-E_0\|_{\infty,\hi} \ge  \rho \rp,
\quad\mbox{and}\quad
A_2=\bp\lp  \|Z^{\eps}-Z^{0}\|_{\infty,\hi} \ge  \rho \rp.
\end{equation}
Moreover, the term $A_1$ is easily handled: owing to (\ref{eq:exp-cvgce-E0}), we have $A_1=0$ as soon as $a=\ka_1 \ln(c/\rho)/\eta$ with $\ka_1>1$. In order to prove (\ref{eq:concentration-equilibrium-2}), it is thus sufficient to check the following identity:
\begin{equation}\label{eq:concentration-equilibrium-3}
\bp\lp  \|Z^{\eps}-Z^{0}\|_{\infty,I} \ge  \rho \rp  \le
\exp\lp -\frac{c_1 \rho^{2+\la}}{\ep^2}  \rp,
\end{equation}
on any interval of the form $I=[\ka_1 \ln(c/\rho)/\eta; \ka_2 \ln(c/\rho)/\eta]$ and $1<\ka_1<\ka_2<\ka_3$. We shall focus on this inequality in the next subsection.

\subsection{Exponential concentration of the stochastic equation}
We will now give a general concentration result for $Z^\ep-Z^0$ on suitable time scales as follows:
\begin{proposition}
Let $Z^\ep$ be the solution to (\ref{E:perturbation}). Then there exists $\eps_0= \ep_0(M,\tau)$ such that, for any $\rho\le 1$ and $\eps\le \eps_0$ we have
\begin{equation}\label{eq:concentration-Z-ep-Z-0}
\bp\left(\|Z^{\varepsilon}-Z^{0}\|_{\infty,[0,\tau]}>\rho \right) \leq
\exp\left(- \frac{c_2 \rho^2 }{e^{\ka_2 \,\tau}  \varepsilon^2}\right),
\end{equation}
where $c_2, \ka_2$ are strictly positive constants which do not depend on $\rho,\eps$, but both depend on our set of parameters  $\al,k,\si,d,m,b,M$.
\end{proposition}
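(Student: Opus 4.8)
The plan is to compare the stochastic and deterministic solutions by a pathwise Gronwall estimate, and then reduce the whole probabilistic content to an exponential tail bound for a martingale whose bracket is controlled deterministically. First I would rewrite \eqref{E:perturbation} in Itô form: since $\si\in\cac^{\infty}$ with $0\le\si'\le C$, the Stratonovich-to-Itô correction only adds a drift $\tfrac{\eps^2}{2}\si(S^\eps_t)\si'(S^\eps_t)$ (resp. with $Q^\eps$), which I collect into a term $\eps^2 r(Z^\eps_t)$ with $\|r\|_\infty\le c$. Writing $b$ for the common deterministic drift of \eqref{E:perturbation} and \eqref{e:truncada2-non-delayed} and $\ssi(z)=\mathrm{diag}(\si(s),\si(q))$ for the diffusion matrix, and recalling $Z^\eps_0=Z^0_0$, subtracting the equation for $Z^0$ yields, for $t\le\tau$,
\[
Z^\eps_t - Z^0_t = \iot \lp b(Z^\eps_s)-b(Z^0_s)\rp\di s + \eps^2\iot r(Z^\eps_s)\,\di s + \eps\iot \ssi(Z^\eps_s)\,\di W_s .
\]

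The delicate point, which I expect to be the main obstacle, is that $b$ is only \emph{locally} Lipschitz: the coupling terms $k\si(Q)S$ and $k(b-1)\si(Q)S$ have a $Q$-derivative proportional to $S$, so a global Lipschitz constant does not exist. To handle this I would localize. Since $Z^0$ stays in the bounded region $R\subset[0,M]^2$ of Theorem~\ref{thm:exp-cvgce-deterministic-nodelay}, introduce the stopping time $T_\rho=\inf\{t\ge0:\,|Z^\eps_t-Z^0_t|\ge\rho\}$; for $\rho\le 1$ the process remains within distance $1$ of $Z^0$ on $[0,T_\rho]$, hence inside a fixed compact set on which $b$ is Lipschitz with a constant $L=L(\al,k,d,m,b,M,C)$ that depends neither on $\rho,\eps$ nor on $\tau$. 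Setting $N_\tau=\sup_{s\le\tau}\big|\eps\int_0^s \ssi(Z^\eps_u)\,\di W_u\big|$, the first display gives $|b(Z^\eps_s)-b(Z^0_s)|\le L|Z^\eps_s-Z^0_s|$ on $[0,\tau\wedge T_\rho]$, and Gronwall's lemma then yields
\[
\sup_{s\le \tau\wedge T_\rho}|Z^\eps_s-Z^0_s| \le \lp c\eps^2\tau + N_\tau\rp e^{L\tau}.
\]
On the event $\{\|Z^\eps-Z^0\|_{\infty,[0,\tau]}>\rho\}$ continuity forces $T_\rho\le\tau$ with $|Z^\eps_{T_\rho}-Z^0_{T_\rho}|=\rho$, so this display gives the inclusion $\{\|Z^\eps-Z^0\|_{\infty,[0,\tau]}>\rho\}\subseteq\{(c\eps^2\tau+N_\tau)e^{L\tau}\ge\rho\}$.

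It then remains to bound $N_\tau$. I would choose $\eps_0=\eps_0(M,\tau)$ small enough that the correction $c\eps^2\tau e^{L\tau}$ is absorbed, reducing matters to $\bp(N_\tau\ge\tfrac12\rho\,e^{-L\tau})$. Here the key simplification is that $\si\le M+1$, so each martingale $M^i_t=\int_0^t\si(\cdot)\,\di W^i_s$ has a deterministically bounded bracket, $\langle M^i\rangle_\tau=\int_0^\tau\si(\cdot)^2\,\di s\le (M+1)^2\tau$. The exponential martingale inequality, obtained from the supermartingale $\exp(\theta M_t-\tfrac{\theta^2}{2}\langle M\rangle_t)$ and optimization in $\theta$, then gives $\bp(\sup_{t\le\tau}|M^i_t|\ge y)\le 2\exp(-y^2/(2(M+1)^2\tau))$. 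A union bound over the two coordinates, with $y$ of order $\rho e^{-L\tau}/\eps$, produces
\[
\bp\lp\|Z^\eps-Z^0\|_{\infty,[0,\tau]}>\rho\rp \le c\,\exp\lp-\frac{c'\rho^2}{\tau\,e^{2L\tau}\,\eps^2}\rp .
\]

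Finally, using $\tau e^{2L\tau}\le c''\,e^{\ka_2\tau}$ for any fixed $\ka_2>2L$ (valid for all $\tau\ge0$) and absorbing the prefactor into the exponent by shrinking the constant yields \eqref{eq:concentration-Z-ep-Z-0}. The heart of the argument is the tension between the merely local Lipschitz character of $b$ and the need for a \emph{time-uniform} exponential tail: the localization above is precisely what keeps $L$ independent of $\tau,\eps,\rho$, and the cost of the $e^{L\tau}$ Gronwall factor together with the linear-in-$\tau$ bracket bound is exactly the factor $e^{\ka_2\tau}$ in the statement. The remaining bookkeeping — absorbing the Stratonovich correction $c\eps^2$ and the combinatorial prefactor into $c_2,\ka_2$ — is why one only asserts the existence of some threshold $\eps_0(M,\tau)$.
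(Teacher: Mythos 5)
Your proof is correct and shares the paper's overall skeleton---a pathwise Gronwall bound reducing $\|Z^\eps-Z^0\|_{\infty,[0,\tau]}$ to the running maximum of the stochastic integrals, then an exponential martingale inequality exploiting $\si\le M+1$, with the Stratonovich correction (of size $O(\eps^2\tau)$) eliminated deterministically for small $\eps$ and the factor $\tau e^{2L\tau}$ absorbed into $e^{\ka_2\tau}$---but it differs in one genuine technical step. Where you localize with the stopping time $T_\rho$, using that $Z^0$ stays in the invariant region inside $[0,M]^2$ so that both processes live in a fixed compact set up to time $T_\rho$, the paper needs no localization at all: it splits each drift difference asymmetrically, e.g.
\begin{equation*}
\si(Q^\eps_s)S^\eps_s-\si(Q^0_s)S^0_s
=\si(Q^\eps_s)\lp S^\eps_s-S^0_s\rp+\lp\si(Q^\eps_s)-\si(Q^0_s)\rp S^0_s,
\end{equation*}
so that the potentially unbounded factor always lands either on the truncated coefficient $\si(Q^\eps_s)\le M+1$ (exactly what the truncation was designed for) or on the deterministic solution, which is bounded thanks to \eqref{eq:exp-cvgce-E0-nodelay}; see \eqref{eq:bnd-St-ep-St-0}--\eqref{eq:bnd-Qt-ep-Qt-0}. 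This yields a global Lipschitz-type estimate along the pair $(Z^\eps,Z^0)$ and a one-line Gronwall argument, with all the probabilistic work deferred to bounding $\|J^1\|_\infty,\|J^2\|_\infty$ afterwards. Your localization is the more robust device---it would survive even without the truncation, as long as the deterministic orbit stays bounded---at the price of the stopping-time and event-inclusion bookkeeping. One caveat, which you share with the paper rather than introduce: absorbing the $O(\eps^2\tau)$ correction below $\tfrac12\rho e^{-L\tau}$ (in the paper, forcing $T_{1,2}=0$) actually requires $\eps^2\lesssim\rho$, so strictly speaking the threshold depends on $\rho$ and not only on $(M,\tau)$; similarly, the multiplicative prefactor from your union bound can only be folded into the exponent when $\rho^2/\eps^2$ is bounded below. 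Since the paper's own proof glosses these same points, they are not defects of your argument relative to it.
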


\begin{proof}
For notational sake, let us abbreviate $\|f\|_{\infty,[0,\tau]}$
into $\|f\|_{\infty}$ throughout the proof. In order to bound
$Z^{\varepsilon}-Z^{0}$, we first seek a bound for
$S^{\varepsilon}-S^{0}$. To this aim we notice that for the
deterministic function $S^0$ and thanks to relation
(\ref{eq:exp-cvgce-E0}), one can find a constant
$\ka_1=\ka_1(\al,k,\si,d,m,b)$ such that  $\|S^0\|_{\infty}\leq
\ka_1$. Set also $J_t^1:=\int_0^t\sigma(S_s^{\varepsilon})\circ \di
W_s^1$. Then
\begin{eqnarray}\label{eq:bnd-St-ep-St-0}
|S_t^{\varepsilon}-S_t^0|
&\leq&\int_0^t\left|\left(\alpha-k\sigma(Q_s^{\varepsilon})\right)S_s^{\varepsilon}-\left(\alpha-k\sigma(Q_s^{0})\right)S_s^{0}\right|\di s+ \varepsilon \left|J_t^1\right| \notag \\
&\leq&\int_0^t\left|\left(\alpha-k\sigma(Q_s^{\varepsilon})\right)(S_s^{\varepsilon}-S_s^0)\right|\di s+
\int_0^tk\left|\sigma(Q_s^{\varepsilon})-\sigma(Q_s^0)\right||S_s^{0}|\di s+ \varepsilon  |J_t^1| \notag \\
&\leq&\int_0^t(\alpha+kM)|S_s^{\varepsilon}-S_s^0|\di s
+\ka_1 k \int_0^t|Q_s^{\varepsilon}-Q_s^0|\di s
+ \varepsilon  |J_t^1|.
\end{eqnarray}
Analogously, setting $J_t^2:=\int_0^t\sigma(Q_s^{\varepsilon})\circ
\di W_s^2$, we obtain
\begin{equation}\label{eq:bnd-Qt-ep-Qt-0}
|Q_t^{\varepsilon}-Q_t^0|
\leq \int_0^t(m+k(b-1)\ka_1)|Q_s^{\varepsilon}-Q_s^0|\di s+\int_0^tk(b-1)M|S_s^{\varepsilon}-S_s^0|\di s
+ \ep |J_t^2|.
\end{equation}

Hence, putting together (\ref{eq:bnd-St-ep-St-0}) and (\ref{eq:bnd-Qt-ep-Qt-0}), we get the existence of two positive constants $\ka_2,\ka_3$ such that
$$
|Z_t^{\varepsilon}-Z_t^0|^2\leq
\ka_2\varepsilon^2\left(|J_t^1|^2+|J_t^2|^2\right)+ \ka_3 \int_0^t
|Z_s^{\varepsilon}-Z_s^0|^2\di s,
$$
and by a standard application of Gronwall's lemma, we get for all $t \in [0,\tau ]$:
\begin{eqnarray}\label{eq:bnd-Zt-ep-Zt-0}
|Z_t^{\varepsilon}-Z_t^0|^2&\leq&\ka_2 \ep^2 \left[|J_t^1|^2+|J_t^2|^2\right]\exp(\ka_3t) \notag\\
&\leq&\ka_2 \ep^2 \left[|J_t^1|^2+|J_t^2|^2\right]\exp(\ka_3\tau ).
\end{eqnarray}

Let us now go back to our claim (\ref{eq:concentration-Z-ep-Z-0}): thanks to inequality (\ref{eq:bnd-Zt-ep-Zt-0}), we have
\begin{multline*}
\bp\left(\|Z^{\varepsilon}-Z^{0}\|_{\infty}>\rho \right)
=\bp\left(\|Z^{\varepsilon}-Z^{0}\|_{\infty}^{2}>\rho^2 \right) \\
\le \bp\lp  \|J^1\|_{\infty}^2+\|J^2\|_{\infty}^2>\frac{\rho^2}{\ka_2 \ep^2 \exp(\ka_3\tau )} \rp
\le T_{1}+T_{2},
\end{multline*}
with
\begin{equation*}
T_{1}=\bp\lp  \|J^1\|_{\infty} >\frac{\ka_4 \rho}{ \ep
\exp(\ka_5\tau )} \rp, \quad\mbox{and}\quad T_{2}=\bp\lp
\|J^2\|_{\infty}
>\frac{\ka_4 \rho}{ \ep \exp(\ka_5\tau )} \rp.
\end{equation*}

We now proceed to bound the quantity $T_{1}$, and to this aim we first write $J_t^1$ in terms of It\^o's integrals: according to \cite[Definition 3.13 p. 156]{KS},
\[
J_t^1=\int_0^t\sigma(S_s^{\varepsilon})  \di W_s^1
+\frac12\left<\sigma(S^{\varepsilon}), \, W^1\right>_{t},
\]
where $\langle\cdot, \, \cdot\rangle$ stands for the bracket of two semi-martingales. Invoking equation (\ref{E:perturbation}) and ordinary rules of Stratonovich differential calculus, it is also readily checked that
\begin{equation*}
\sigma(S_{t}^{\varepsilon}) = \sigma(S_{0}^{\varepsilon})
+ \ep \int_0^t\sigma\sigma'(S_s^{\varepsilon})  \di W_s^1 + V_t,
\end{equation*}
where $V$ is a process with bounded variation. We thus end up with the expression $J_t^1=\hm_t^1+V_t^1$, where
\begin{equation*}
\hm_t^1=\int_0^t\sigma(S_s^{\varepsilon})  \di W_s^1,
\quad\mbox{and}\quad
V_t^1=\frac\ep 2 \int_0^t \sigma\sigma'(S_s^{\varepsilon}) \di s,
\end{equation*}
and decompose $T_{1}$ accordingly into $T_{1}\le T_{1,1}+T_{1,2}$, with
\begin{equation*}
T_{1,1}=\bp\lp  \|\hm^1\|_{\infty} >\frac{\ka_4 \rho}{ \ep \exp(\ka_3\tau )} \rp,
\quad\mbox{and}\quad
T_{1,2}=\bp\lp  \|V^1\|_{\infty} >\frac{\ka_4 \rho}{ \ep \exp(\ka_3\tau )} \rp.
\end{equation*}
We now bound the terms $T_{1,1}$ and $T_{1,2}$ separately.

\smallskip

The term $T_{1,2}$ is easily bounded thanks to some deterministic arguments. Indeed, since $\si\si'(x)\le C(M+1)$ for any $x\in\R_+$, we have $\|V^1\|_{\infty}\le C (M+1) \ep \tau$, so that for any $\rho\le 1$ and $\ep\le \ep_1:=(\ka_4 /(C(M+1)\tau \exp(\ka_3\tau )))^{1/2}$, we have $T_{1,2}=0$. As far as $T_{1,1}$ is concerned, one can apply the exponential martingale inequality (see, for instance, \cite{Do}) for stochastic integrals in order to get
\begin{equation*}
T_{1,1} \le \exp\lp  -\frac{\ka_4 \rho^2}{M^2 \exp(\ka_3\tau ) \ep^2}\rp.
\end{equation*}
Putting together the estimates for $T_{1,1}$ and $T_{1,2}$, we have thus obtained
\begin{equation*}
T_{1} \le \exp\lp  -\frac{\ka_4 \rho^2}{M^2 \exp(\ka_3\tau ) \ep^2}\rp,
\end{equation*}
for any $\rho\le 1$ and $\ep\le \ep_1:=(\ka_4 /(C (M+1)\tau \exp(\ka_3\tau )))^{1/2}$. We let the reader check that the term $T_2$ can be handled along the same lines, which finishes our proof.

\end{proof}

\subsection{Deviation from equilibrium}
Let us now prove inequality (\ref{eq:concentration-equilibrium-2}): recall that we have decomposed $\bp(  \|Z^{\eps}-E_0\|_{\infty,I} \ge 2 \rho )$ into $A_1+A_2$ defined by (\ref{eq:def-A1-A2}). Furthermore, $A_1=0$ when $\hi$ is of the form $[a,b]$ with $a=\ka_1 \ln(c/\rho)/\eta$.

\smallskip

In order to complete our result, let us analyze the term $A_2$ in the light of inequality~(\ref{eq:concentration-Z-ep-Z-0}). Indeed, in order to go from (\ref{eq:concentration-Z-ep-Z-0}) to (\ref{eq:concentration-equilibrium-3}), it is sufficient to choose $\rho,\tau,\la$ such that
\begin{equation*}
\rho^2 \exp(-\ka_2 \tau) > \rho^{2+\la},
\end{equation*}
which is achieved for $\tau<b:=\la \ln(1/\rho)/\ka_2$. Hence our claim is satisfied on the interval $\hi=[a,b]$. We now have to verify that this interval is nonempty, namely that $a<b$. This gives a linear equation in $\ln(1/\rho)$, of the form
$$
\frac{\ka_1 }{\eta} \lc \ln(1/\rho) +\ln(c) \rc \le
\frac{\la}{\ka_2} \ln(1/\rho).
$$
and the reader might easily check that the following conditions are sufficient:

\smallskip

\noindent
\emph{(i)} The linear terms satisfy $\frac{\ka_1 }{\eta}<\frac{\la}{\ka_2}$, that is $\la>\frac{\ka_1\ka_2}{\eta}$.

\smallskip

\noindent
\emph{(ii)} We take $\rho$ small enough, namely $\rho\le \rho_0$ in order to compensate the term $\ln(c)$.

\smallskip

\noindent
The proof of (\ref{eq:concentration-equilibrium-2}) is now finished.

\subsection{Extension to the delayed system}

Let us deal now with the delayed case: as mentioned in the introduction, we  consider the system
\begin{equation}\label{E:delaypertu}
\left\{
\begin{aligned}
\di S_t^\eps & =\left[\alpha-k\sigma(Q_t^\eps)\right]S_t^\eps\di t+ \eps\sigma(S_t^\eps)\circ\di W^1_t\\
\di Q_t^\eps & = \left[d-mQ_t^\eps-k \sigma(Q_{t}^\eps)S_{t}^\eps+k \, b \, e^{-\mu \ze} \sigma(Q_{t-\ze}^\eps)S_{t-\ze}^\eps\right]\di t + \eps\sigma(Q_t^\eps)\circ\di W^2_t,
\end{aligned}
\right.
\end{equation}
where for any $t \in [-\ze,0]$ and for any $\eps >0$, $(S_t^\eps,Q_t^\eps)=(S_t^0,Q_t^0)$.

Under Hypothesis \ref{hyp:coeff-sde}, \ref{hyp:sigma} and \ref{hyp:coeff-delay} we have shown the existence of a unique equilibrium $E_0$ for the deterministic system  \eqref{e:truncada2}, corresponding to (\ref{E:delaypertu}) with $\eps=0$. Following the non-delayed case, we  wish to obtain a concentration result for the perturbed system (\ref{E:delaypertu}), as is given in  Theorem \ref{thm:concentration-equilibrium-delayed}.

The proof of this result can be carried out almost exactly as for Theorem \ref{thm:concentration-equilibrium}. Let us only point out the
main difference: how to get an equivalent of inequalities  (\ref{eq:bnd-St-ep-St-0}) and (\ref{eq:bnd-Qt-ep-Qt-0}). To this aim, we set again $J_t^1:=\int_0^t\sigma(S_s^{\varepsilon})\circ \di W_s^1$ and   $J_t^2:=\int_0^t\varepsilon\sigma(Q_s^{\varepsilon})\circ \di W_s^2$. Then in the delayed case, relations  (\ref{eq:bnd-St-ep-St-0}) and (\ref{eq:bnd-Qt-ep-Qt-0}) become
\begin{equation}\label{eq1delay}
|S_t^{\varepsilon}-S_t^0|
 \leq \int_0^t(\alpha+kM)|S_s^{\varepsilon}-S_s^0|\di s
+\ka_1 k \int_0^t|Q_s^{\varepsilon}-Q_s^0|\di s
+ \varepsilon  |J_t^1|,
\end{equation}
and
\begin{eqnarray}\label{eq2delay}
|Q_t^{\varepsilon}-Q_t^0|
&\leq& \int_0^t(m+k\ka_1)|Q_s^{\varepsilon}-Q_s^0|\di s+\int_0^t k M|S_s^{\varepsilon}-S_s^0|\di s
+ \ep |J_t^2|\\
 & & \quad +\int_0^t k b M e^{-\mu \ze} |S_{s-\ze}^{\varepsilon}-S_{s-\ze}^0|\di s  +\int_0^t k b k_1 e^{-\mu \ze} |Q_{s-\ze}^{\varepsilon}-Q_{s-\ze}^0|\di s .\nonumber
\end{eqnarray}
Using that  for any $t \in [-\ze,0]$ and for any $\eps >0$,
$(S_t^\eps,Q_t^\eps)=(S_t^0,Q_t^0)$ we can write the bounds
\begin{eqnarray*}
\int_0^t k b M e^{-\mu \ze} |S_{s-\ze}^{\varepsilon}-S_{s-\ze}^0|\di s  &=& \int_0^{t-\ze} k b M e^{-\mu \ze} |S_{s}^{\varepsilon}-S_{s}^0|\di s \le \int_0^{t} k b M e^{-\mu \ze} |S_{s}^{\varepsilon}-S_{s}^0|\di s,  \\
\int_0^t k b k_1 e^{-\mu \ze} |Q_{s-\ze}^{\varepsilon}-Q_{s-\ze}^0|\di s &=& \int_0^{t-\ze} k b k_1 e^{-\mu \ze} |Q_{s}^{\varepsilon}-Q_{s}^0|\di s \le
\int_0^{t} k b k_1 e^{-\mu \ze} |Q_{s}^{\varepsilon}-Q_{s}^0|\di s
\end{eqnarray*}

Then, putting these last bounds in  (\ref{eq1delay}) and (\ref{eq2delay})   we get the existence of two positive constants $\ka_2,\ka_3$ such that
$$
|Z_t^{\varepsilon}-Z_t^0|^2\leq
\ka_2\left(|J_t^1|^2+|J_t^2|^2\right)+ \ka_3 \int_0^t |Z_s^{\varepsilon}-Z_s^0|^2\di s.
$$
From this point, the proof follows exactly as for Theorem \ref{thm:concentration-equilibrium}.

\section{Numerical simulations}\label{sec:numerics}

This final section is devoted to a presentation of some numerical simulations for the system described by equation \eqref{e:truncada-delayed}. We have chosen the parameters $(\alpha,k,d,m,b,\ze)$ according to some real data observed by the Molecular Biology Group of the Department of Genetics and Microbiology at \emph{Universitat Aut\`onoma de Barcelona}. However, since the stochastic effects we are dealing with cannot be taken into account in laboratory experiments, we did not try to match existing curves like those of \cite{CPR}. We have chosen instead to compare theoretical and noisy dynamics in order to see that the quantities $S$ and $Q$ are close to their equilibrium after a reasonable amount of time, in spite of randomness.

It is worth noticing at this point that the parameters we have chosen for our simulations do not meet the conditions stated at Hypothesis \ref{hyp:coeff-delay}. Indeed, those conditions were imposed in order to obtain our theoretical large deviations type results with a reasonable amount of efforts, but might be too restrictive to fit to real data experiments. Nevertheless, our simulations turn out to be satisfactory, since we observe that the solution $(S_t,Q_t)$ converges to $E_0$ for small values of $\eps$ in a reasonable amount of time, regardless of the violation of Hypothesis \ref{hyp:coeff-delay}.

Specifically, we have simulated trajectories with parameters
estimated on an experiment involving Salmonella ATCC14028 bacteria and UAB\_Phi78 virus. From the
experiments conducted by the mentioned group we have chosen the parameters as:
$$ (\alpha,k,d,m,b,\ze)=(12.1622,27.36,0.1,0.1947,61,0.01875).
$$
We have also put $M=10$, $\mu=0.5$, and we have taken the initial conditions $S_{0,t}=4.8e^{\al(t+\ze)}$, $Q_{0,t}=0$ for $t\in[\ze,0]$. The time is expressed in days and the amount of virus and bacteria are expressed in tens of millions of units.

\begin{figure}
  \centering
    \includegraphics[width=0.4\textwidth]{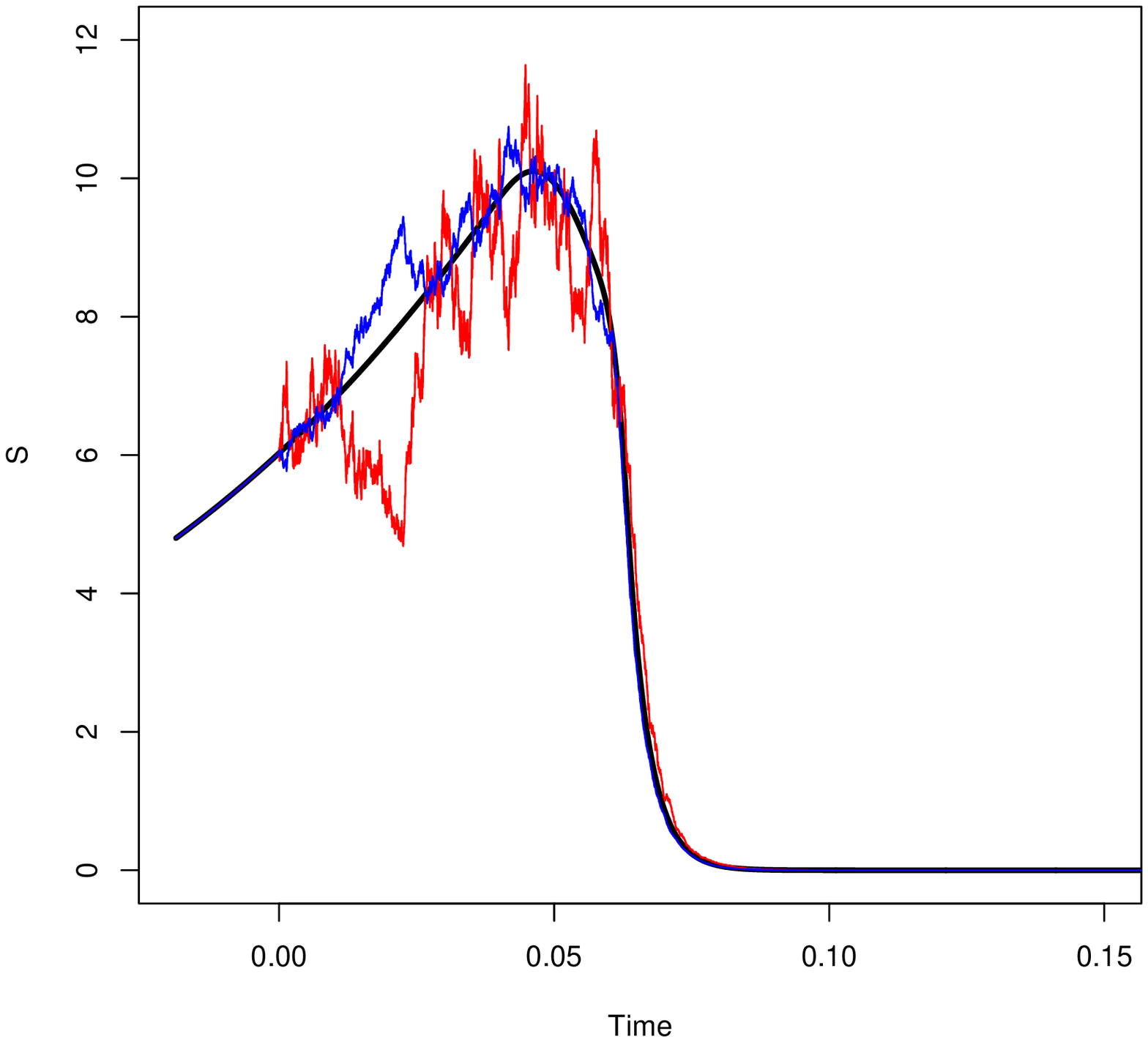}
    \includegraphics[width=0.4\textwidth]{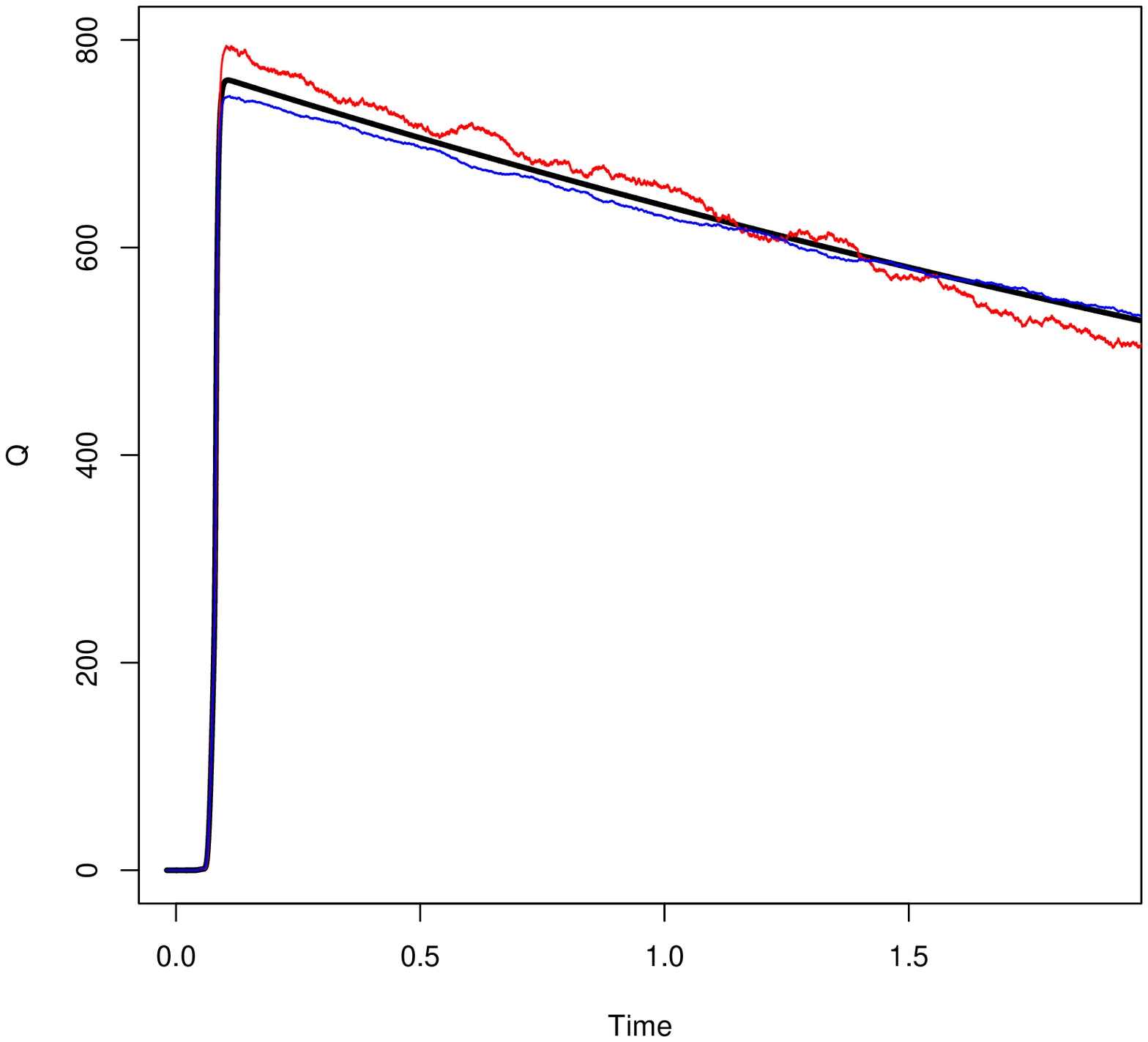}
  \caption{Simulation of the trajectories of $S$ and $Q$ with real parameters for the Salmonella ATCC14028 bacteria and UAB\_Phi78 virus for the deterministic case ($\varepsilon= 0$), for
$\varepsilon=-3$ (red curve) and $\varepsilon=1$ (blue cur\-ve).}
  \label{fig:exemple}
\end{figure}

Our simulations are summarized at Figure \ref{fig:exemple}, in which different paths of the processes $S$ and $Q$ are computed. We have used an Euler type discretization scheme for our equations, implemented with the {\bf R} software. We have then plotted the deterministic case ($\varepsilon=0$) plus the curves corresponding to several values of $\ep$ (namely $\varepsilon=-3,1$). As mentioned before, the fluctuations of $S$ and $Q$ (which are obviously due to the randomness we have introduced) do not prevent them to converge to equilibrium.

\section*{Acknowledgments}

We would like to thank Prof. Montserrat Llagostera and the rest of the members of the Molecular Biology Group of the Department of Genetics and Microbiology at \emph{Universitat Aut\`onoma de Barcelona} for providing us the data we used in Section \ref{sec:numerics}, and for her useful comments and suggestions.

\medskip

\end{document}